\documentclass[11pt,reqno]{amsart}

\usepackage[T1]{fontenc}
\usepackage{amsmath,amssymb,mathtools}
\usepackage{microtype}
\usepackage[hidelinks]{hyperref}

\allowdisplaybreaks
\theoremstyle{plain}
\newtheorem{theorem}{Theorem}[section]
\newtheorem{proposition}[theorem]{Proposition}
\newtheorem{lemma}[theorem]{Lemma}
\newtheorem{corollary}[theorem]{Corollary}

\theoremstyle{definition}
\newtheorem{problem}[theorem]{Problem}

\theoremstyle{remark}
\newtheorem{remark}[theorem]{Remark}

\newcommand{\F}{\mathcal F}
\newcommand{\D}{\mathcal D}
\newcommand{\Hh}{\mathcal H}
\newcommand{\Vv}{\mathcal V}
\newcommand{\R}{\mathbb R}
\newcommand{\HH}{\mathbb H}
\newcommand{\CP}{\mathbb{CP}}
\newcommand{\Id}{\operatorname{Id}}
\newcommand{\spanop}{\operatorname{span}}

\newcommand{\Kmix}{K_{\mathrm{mix}}}
\newcommand{\rh}{\rho}

\title[Closed counterexamples to Toponogov's question]
{Closed counterexamples to Toponogov's question on mixed curvature}

\author{Yaroslav Bazaikin}
\address{Department of Mathematics, Faculty of Science, Jan Evangelista
Purkyn\v{e} University in \'{U}st\'{\i} nad Labem,
Pasteurova 3632/15, 400 96 \'{U}st\'{\i} nad Labem,
Czech Republic}
\email{bazaikin@gmail.com}

\subjclass[2020]{53C12, 53C20, 53C21, 57R30}
\keywords{totally geodesic foliations, mixed sectional curvature,
Toponogov's problem, conullity tensor, fatness, Riccati equation}

\hypersetup{
  pdftitle={Closed counterexamples to Toponogov's question on mixed curvature},
  pdfauthor={Yaroslav Bazaikin},
  pdfsubject={Totally geodesic foliations and positive mixed curvature},
  pdfkeywords={mixed curvature, totally geodesic foliations, Toponogov's problem, Riccati equation}
}

\begin{document}

\begin{abstract}
We give a negative answer to Toponogov's question whether positive mixed
sectional curvature on a closed manifold forces the Ferus--Adams dimension
bound for a totally geodesic foliation.  For every even \(N\ge4\), we
construct explicit metrics on \(S^{4m-1}\times S^3\) and on totally geodesic
fixed point submanifolds \(S^{4m-1}\times S^1\).  The leaves are closed
geodesics forming smooth circle fibrations, but the metrics are not
bundle-like.  One subfamily has a positive mixed-curvature operator that is
parallel along every leaf but nonscalar, with spectrum
\(\{\kappa,4\kappa\}\); it is a
closed realization of Rovenskii's local anisotropic Riccati model.  In a
second subfamily, \(\delta_{\mathrm{mix}}\to1\) while the curvature operator
remains nonparallel.  After normalizing \(\max\Kmix=1\), the leaf lengths tend
to \(2\pi\), thereby answering the bounded-length question for the local
pinched models in the closed setting.
Thus neither leafwise parallelism nor pinching by any fixed
constant below one replaces scalarity in the Ferus argument;
by contrast, the Ferus--Adams bound remains valid in the
bundle-like case.
\end{abstract}

\maketitle

\section{Introduction}
\label{sec:introduction}

Throughout the paper, closed means compact without boundary.  Let
\((M^{p+n},g)\) carry a regular totally geodesic foliation \(\F^p\).  A mixed
plane is spanned by a leafwise and a normal vector, and its sectional
curvature is denoted by \(\Kmix\).  Ferus proved that if all mixed sectional
curvatures along every complete leaf equal the same positive constant, then
\begin{equation}\label{eq:Ferus-bound}
        p\le \rh(n)-1,
\end{equation}
where \(\rh(n)-1\) is the maximal number of pointwise linearly independent
vector fields on \(S^{n-1}\) \cite{Adams1962,Ferus1970}.  Toponogov asked
whether \eqref{eq:Ferus-bound} still follows on a closed manifold from the
weaker assumption \(\Kmix>0\).  For background on mixed curvature and the
extrinsic geometry of foliations, see
\cite{Rovenskii2000,RovenskiWalczak2021,Rovenski2023} and the references
therein.  Local examples and refinements involving closed geodesics,
\(L\)-Jacobi fields, and partial Ricci flow appear in
\cite{Rovenskii1982,Rovenskii1994,Rovenskii1997,
RovenskiSharafutdinov2013,Rovenski2014,Rovenski2019}.

There is an important positive case.  If the metric is bundle-like, the
foliation is locally a Riemannian submersion with totally geodesic fibers.
O'Neill's formula identifies positive mixed curvature with fatness, and the
usual fatness argument produces \(p\) independent vector fields on
\(S^{n-1}\); hence \eqref{eq:Ferus-bound} holds.  We include the short
argument in Proposition~\ref{prop:bundle-like-Ferus}.  Thus any
counterexample must leave the bundle-like category.

The role of the constant-curvature hypothesis is seen from the Riccati
equation along a leafwise geodesic,
\[
        B'+B^2+R_T=0,
\]
where \(B\) is the conullity tensor and \(R_T\) the mixed-curvature operator.
When \(R_T=k\Id\), scalarity preserves real eigenspaces of \(B\), and the
matrix equation restricts to a scalar Riccati equation, which cannot exist
along a complete geodesic.  Positivity alone does not provide this invariant
line.

Both local ways in which the scalar argument can fail were already present
in Rovenskii's work.  In odd normal rank, the constant positive matrix
\[
        \operatorname{diag}\left(1,\frac14,\ldots,\frac14\right)
\]
admits a global matrix Riccati solution and gives a local foliation near a
closed geodesic \cite[Eq.~(3.12)]{Rovenskii2000}.  The same survey constructs
almost \(1\)-pinched tubular models on \(S^1\times B^n(r)\), whose normalized
leaf lengths diverge, and asks whether those lengths can remain bounded
\cite[Sec.~3.4.2]{Rovenskii2000}.  These local models did not provide closed
total spaces; the compact problem was still recorded as open in later work
\cite{RovenskiSharafutdinov2013}.  The point of the construction below is to
realize both mechanisms globally while keeping positive mixed curvature in
all normal directions.

\begin{theorem}[Closed counterexamples]\label{thm:main}
For every even integer \(N\ge4\), there is a closed \(N\)-dimensional
Riemannian manifold carrying a one-dimensional totally geodesic foliation
with \(\Kmix>0\).  Its leaves are closed geodesics and form a smooth circle
fibration, but the metric is not bundle-like with respect to this fibration.
\end{theorem}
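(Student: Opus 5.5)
The plan is to realize the leaves as orbits of a free circle action on a product of spheres, but to equip the total space with a metric that is \emph{not} invariant under this action. The leaves stay closed geodesics, while the transverse metric is allowed to rotate along them. For \(N=4m\) I would take \(M=S^{4m-1}\times S^1\), and for \(N=4m+2\) I would take \(M=S^{4m-1}\times S^3\); together these cover every even \(N\ge4\). I view \(S^{4m-1}\subset\HH^m\) and \(S^3\) as the unit quaternions. The leaf direction is spanned by a vector field \(T\) that generates a diagonal circle action: the complex Hopf rotation \(q\mapsto e^{it}q\) on \(S^{4m-1}\), combined with a rotation of the second factor at a chosen rate. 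Freeness of this action gives a smooth circle fibration. The \(S^1\) case should then arise as the fixed point set of an isometric involution or torus action of the \(S^3\) model, so it is totally geodesic and inherits the foliation.

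\textbf{Step 1 (geodesic leaves without invariance).} Write \(g=\theta^2+h\), where \(\theta\) is a \(1\)-form with \(\theta(T)=1\), \(h\) is a field of positive forms on \(\ker\theta\), and \(h(T,\cdot)=0\). The integral curves of \(T\) are unit-speed geodesics exactly when \(i_T d\theta=0\), since \(g(\nabla_TT,X)=-d\theta(T,X)\) once \(|T|\equiv1\). I would therefore take \(\theta\) to be the connection form of the fibration, averaged so that \(L_T\theta=0\), and keep all non-invariance in \(h\). Concretely, I would fix an invariant left-invariant-style horizontal frame \(e_1,\dots,e_{N-1}\), built from the quaternionic structure \(i,j,k\) on \(S^{4m-1}\) and from left-invariant fields on \(S^3\). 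I would then set \(h\) to be orthonormal on a frame obtained by rotating selected pairs \((e_a,e_b)\) by an angle that depends on the leaf parameter, with rates \(1\) and \(2\). Since \(h\) is not \(T\)-invariant, \(L_Tg\ne0\). This already shows the metric is not bundle-like, because bundle-like for a one-dimensional foliation by unit geodesics is equivalent to \(T\) being Killing.

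\textbf{Step 2 (compute \(R_T\) via the Riccati equation).} Along each leaf I would use \(B'+B^2+R_T=0\), with the conullity tensor \(B=\nabla T|_{T^\perp}\). Here \(B\) splits into a symmetric part \(\tfrac12 h^{-1}L_Th\), which comes from the rotation rates, and a skew part coming from \(d\theta\) (the O'Neill \(A\)-tensor). The rates and the size of \(d\theta\) should be tuned so that \(B\) is a constant skew-plus-symmetric matrix in the rotating frame. Then \(R_T=-B'-B^2\) is explicitly computable and, for the first subfamily, parallel with spectrum \(\{\kappa,4\kappa\}\). This reproduces Rovenskii's model \(\operatorname{diag}(1,\tfrac14,\dots)\) after scaling. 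The second subfamily would come from a parameter that sends the two rates together, giving \(\delta_{\mathrm{mix}}\to1\).

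\textbf{Main obstacle.} The hard step is showing \(\Kmix>0\) in \emph{all} normal directions. The Riccati computation only controls \(R_T\) through \(B\), and the choices of frame, rates and \(d\theta\) must be globally compatible on the product so that \(R_T=-B'-B^2\) is positive definite everywhere. The directions tangent to \(S^{4m-1}\) but orthogonal to the quaternionic line, and the cross terms between the two factors, are where a naive choice gives zero or negative mixed curvature. My first attempt would be to scale the \(S^3\) factor and the horizontal distribution separately, Berger-style, and to check positivity of the resulting constant matrix \(-B^2-B'\) in the rotating frame as a finite-dimensional eigenvalue computation.
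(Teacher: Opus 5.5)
Your framework is sound as far as it goes, and the paper's metric fits it. With $\theta=g(T,\cdot)$, geodesic leaves mean $i_Td\theta=0$, non-bundle-likeness means $L_Th\neq0$, and in the paper $\theta=\beta_1+p\sigma_1$. The fixed-point reduction to $S^{4m-1}\times S^1$ is also the paper's.

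There is a genuine gap, however. The theorem \emph{is} the existence of a globally defined non-invariant $h$ with $R_T>0$ on all of $T^\perp$. You leave exactly this open, so nothing beyond the easy parts is proved.

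The device in Step~1 also fails as written, for two reasons.
\begin{itemize}
\item Declaring an orthogonally rotated frame orthonormal just reproduces the metric in which the unrotated frame is orthonormal. So $h$ stays $T$-invariant unless the rotating axes carry unequal lengths.
\item On the simply connected $M=S^{4m-1}\times S^3$ there is no angle to rotate by. Any smooth $\phi\colon M\to\R/\pi\mathbb{Z}$ lifts to $\R$, so $\int_0^\ell T\phi\,dt=0$ along every closed leaf, and $T\phi$ cannot be a nonzero constant. (On $S^{4m-1}\times S^1$ with your diagonal action the $S^1$-coordinate would serve, but you obtain that case from the $S^3$ model.)
\end{itemize}
The rotation has to come from globally defined fields that the flow of $T$ itself rotates. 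The paper takes $T=u_1$, on the first factor only, and the Berger metric $g_{A,C}$ with $A>C$, for which $[u_1,u_2]=Au_3$ and $[u_3,u_1]=Cu_2$.

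Even with this anisotropy in place, your suggested remedy of rescaling the factors separately cannot make $\Kmix$ positive. Suppose a unit normal field $\nu$ satisfies $[T,\nu]=0$, $i_\nu d\theta=0$, and $g(\nu,[T,Y])=0$ for the fields $Y$ of a frame with constant inner products. Then both the skew part $\tfrac12 d\theta$ and the symmetric part $\tfrac12 L_Tg$ of $B$ vanish against $\nu$. Hence $B\nu=B^{\mathsf T}\nu=0$, $\nu$ is parallel, and $\Kmix(T,\nu)=-\langle(B'+B^2)\nu,\nu\rangle=0$. For the natural choices in your setup, the normal direction in the span of the two commuting circle generators is such a $\nu$. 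For $T=u_1$ with a product-type metric, all three $S^3$-directions are.

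The missing idea is the pair of cross terms in the paper's metric.
\begin{itemize}
\item The term $q(\beta_3\otimes\sigma_1+\sigma_1\otimes\beta_3)$ couples $v_1$ to the rotating direction $u_3$. It is the only source of $\langle R_T\nu,\nu\rangle=\tfrac14 r^2(A-C)(A+3C)$, where $r=q/\sqrt{1-p^2-q^2}$. The $(\nu,u_3)$-block then has determinant $\tfrac14 r^2A^2C(A-C)>0$.
\item The term $p(\beta_1\otimes\sigma_1+\sigma_1\otimes\beta_1)$ puts $\sigma_1$ into $\theta$, so $[v_2,v_3]=Dv_1$ gives $R_T=D^2p^2/4$ on $v_2$ and $v_3$.
\end{itemize}
Together with $R_T=AC/4$ on $\D$ and the positive $u_2$-entry, this explicit block computation is what your proposal still has to supply.
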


The examples are metrics on \(S^{4m-1}\times S^3\), in dimensions \(4m+2\),
and their restrictions to totally geodesic fixed point submanifolds
\(S^{4m-1}\times S^1\), in dimensions \(4m\).  The leaf dimension is
\(p=1\), while \(n=N-1\) is odd and hence \(\rh(n)-1=0\).  They therefore
contradict \eqref{eq:Ferus-bound} in every even dimension.

The first refinement is a closed realization of the local parallel
anisotropic model.  The derivative below is taken in
\(\operatorname{End}(T^\perp)\) with the normal connection.

\begin{theorem}[Parallel anisotropic counterexamples]
\label{thm:parallel-anisotropic}
For every even \(N\ge4\), the examples in Theorem~\ref{thm:main} can be
chosen so that
\[
        \nabla_T^\perp R_T=0,
        \qquad
        \operatorname{spec}(R_T)=\{\kappa,4\kappa\}
\]
for a constant \(\kappa>0\).  The eigenvalue \(4\kappa\) is simple and
\(\kappa\) has multiplicity \(N-2\).
\end{theorem}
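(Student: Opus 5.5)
The plan is to realize the anisotropic Riccati model directly. I would first choose the leafwise data (the normal connection along each leaf, and a conullity tensor \(B\) solving \(B'+B^2+R_T=0\) with \(R_T\) parallel of spectrum \(\{\kappa,4\kappa\}\)), and then find a homogeneous metric on the circle bundle that produces exactly these data.

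\textbf{Ansatz.} I would build the metric on \(S^{4m-1}\times S^3\) as a metric invariant under a large isometry group \(G\). Let \(T\) generate the diagonal circle action that combines a Hopf-type circle on \(S^{4m-1}\subset\HH^m\) (right multiplication by \(e^{it}\)) with a circle acting on \(S^3\). I would then deform the product of round metrics by a left-invariant, \(G\)-invariant symmetric tensor. The deformation mixes the \(T\)-direction with one transverse direction, namely the one that will carry the eigenvalue \(4\kappa\). Because the family is homogeneous, \(T\) is a unit geodesic field exactly when \(\nabla_TT=0\). This reduces to an algebraic condition on the deformation parameters, which I would impose first.

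\textbf{Why the metric is not bundle-like.} I would arrange that \(T\) is not Killing, so \(B\) has a nonzero symmetric part. Along each leaf, \(B\) and \(R_T\) are then computed in a normal frame. In this frame, the normal connection \(\nabla^\perp_T\) acts as a constant skew endomorphism \(\Omega\), built from the \(i\)-multiplication on quaternionic coordinates. The key point is that \(\Omega\) commutes with the spectral projection onto the \(4\kappa\)-line. This gives \(\nabla^\perp_TR_T=0\) even though \(R_T\) is not scalar.

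\textbf{Main computation.} In the rotating frame, the tensors \(B\) and \(R_T\) become constant, and the Riccati equation turns into an algebraic equation
\[
        [\Omega,B]+B^2+R_T=0 .
\]
The main step is to solve this with \(R_T=\operatorname{diag}(4\kappa,\kappa,\dots,\kappa)\), subject to \(R_T\) coming from an actual metric. In the \(\kappa\)-eigenspace, which is even-dimensional after adding the \(S^1\) or \(S^3\) directions appropriately, I would take \(B=\sqrt{\kappa}\,J\) with \(J\) a complex structure commuting with \(\Omega\). On the \(4\kappa\)-line, together with a companion direction, the commutator term \([\Omega,B]\) supplies the missing negative-definite part. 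This is the mechanism that makes a global solution possible in odd normal rank, and it is where the non-Killing symmetric part of \(B\) is essential.

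I would then do the following, in order:
\begin{enumerate}
\item Compute \(\Kmix\) from O'Neill/Gray-type formulas for homogeneous metrics, or directly from Koszul's formula on the Lie algebra.
\item Check that the resulting \(R_T\) is positive and has the stated spectrum, with \(4\kappa\) simple and \(\kappa\) of multiplicity \(N-2\).
\item Verify that the metric is not bundle-like, i.e.\ that \(L_Tg\ne0\) on \(T^\perp\).
\end{enumerate}

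\textbf{Passing to dimension \(4m\).} I would take the fixed point set of an isometric involution that commutes with \(T\); for example, complex conjugation on the \(S^3\) factor leaves a great circle \(S^1\) invariant. The fixed set \(S^{4m-1}\times S^1\) is totally geodesic and contains the leaves, so \(R_T\) restricts. One must choose the involution so that the \(4\kappa\)-line lies in the fixed tangent space; this is what keeps the spectrum \(\{\kappa,4\kappa\}\) with the stated multiplicities there.

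\textbf{Main obstacle.} I expect the hard step to be the compatibility between the algebraic Riccati solution and an honest metric. We need a choice of deformation parameters so that \(\nabla_TT=0\), \(R_T\) is exactly \(\operatorname{diag}(4\kappa,\kappa,\dots)\) in the rotating frame, and all mixed curvatures are positive, all simultaneously. I would first try the smallest case, \(N=4\) on \(S^3\times S^1\) with a Berger-type one-parameter family, solve for the parameter there, and then extend \(\HH\)-linearly to \(S^{4m-1}\).
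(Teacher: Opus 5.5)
There is a genuine gap: the proposal never produces a metric. The theorem is an existence statement, and its entire content is an explicit metric together with the computation of $\Omega$, $B$ and $R_T$ along the leaves. You defer exactly this as the ``main obstacle''. Prescribing Riccati data first and then hoping that some homogeneous metric realizes them is not backed by any realization result.

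The ansatz you describe also fails as stated. Take $m=1$ and read $T$ as left-invariant on $S^3\times S^3$ (right multiplication on both factors). Write $g=\langle\mathcal I\,\cdot,\cdot\rangle_0$, where $\langle\cdot,\cdot\rangle_0$ is the bi-invariant round product. Then $\nabla_TT=0\iff[T,\mathcal IT]=0$, and $\mathcal L_Tg=0\iff[\operatorname{ad}_T,\mathcal I]=0$. Suppose the deformation is supported on $\spanop\{T,W\}$ for a single direction $W$. Geodesicity then forces $[T,W]=0$. The map $\operatorname{ad}_T$ is $\langle\cdot,\cdot\rangle_0$-skew, kills $\spanop\{T,W\}$ and preserves its orthogonal complement, so it commutes with $\mathcal I$. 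Hence $T$ is Killing and the metric is bundle-like, and Proposition~\ref{prop:bundle-like-Ferus} excludes $\Kmix>0$ in odd normal rank.

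Your leafwise algebra is also inconsistent. Parallelism forces $\Omega$ to annihilate the $4\kappa$-line $\ell$. If $B$ preserved the whole $\kappa$-eigenspace, then with $\beta=\langle Bz,z\rangle$ for a unit $z\in\ell$, the $\ell\ell$-entry of $[\Omega,B]+B^2+R_T=0$ would read $\beta^2+4\kappa=0$, which is impossible. With a single companion direction $w$, $\Omega$ must vanish on $\ell\oplus\R w$, and one would need $B^2=-\operatorname{diag}(4\kappa,\kappa)$ there, which has no real solution. So the anisotropic block must be at least three-dimensional.

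The missing idea is the specific metric of Section~\ref{sec:S4m-times-S3}.
\begin{itemize}
\item The leaf field $T=u_1$ lives on the sphere factor alone, not as a diagonal action.
\item The sphere carries the Berger metric $g_{A,C}$ with $A>C$, squashed along $\xi_2$, which is transverse to the leaves. The brackets $[u_1,u_2]=Au_3$ and $[u_3,u_1]=Cu_2$ keep $\nabla_TT=0$ while making $u_1$ non-Killing.
\item There are two couplings to $v_1$. The $p$-term, through $[v_2,v_3]=Dv_1$, gives $R_T=D^2p^2/4$ on $v_2,v_3$. The $q$-term makes the $(\nu,u_3)$-block positive.
\item Dropping any one of $A>C$, $p\neq0$, $q\neq0$ produces a zero mixed curvature.
\end{itemize}
On $\spanop\{\nu,u_2,u_3\}$ one computes $\Omega_0$ and $R_0$ and finds $[\Omega_0,R_0]\propto C-r^2(A-C)$, with $r=q/h$. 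Parallelism is therefore the condition $r^2(A-C)=C$. Under it, $\det(\lambda\Id-R_0)=(\lambda-AC)(\lambda-AC/4)^2$. The $\D$-block equals $AC/4$, and the choice $D=\sqrt{AC}/p$ makes the $v_2,v_3$-entries equal to $AC/4$ as well. This gives $\kappa=AC/4$ with the stated multiplicities.

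Your fixed-point step matches the paper's use of $\Id\times\tau$, with $\tau$ conjugation by $i$. It is fine once this ambient example exists.
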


Thus leafwise parallelism of a positive mixed-curvature operator does not
replace scalarity.  A complementary subfamily approaches the scalar regime
pointwise.  Set
\[
        \delta_{\mathrm{mix}}(g,\F)
        =\frac{\min\Kmix}{\max\Kmix},
\]
with the extrema over mixed unit planes.

\begin{theorem}[Almost \(1\)-pinched counterexamples]
\label{thm:pinched}
For every even \(N\ge4\) and every \(0<\delta<1\), the examples in
Theorem~\ref{thm:main} can be chosen with
\(\delta_{\mathrm{mix}}(g,\F)>\delta\).  Along the explicit almost
\(1\)-pinched families, after normalizing \(\max\Kmix=1\), the circle lengths
converge to \(2\pi\).
\end{theorem}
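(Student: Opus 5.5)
We need to prove Theorem~\ref{thm:pinched}, and the whole argument runs through the Riccati equation along the leaves. For a unit-speed closed leafwise geodesic \(\gamma\) with tangent \(T\), we have \(R_T=-(B'+B^2)\), where \(B\) is the conullity tensor. So instead of prescribing the metric and computing curvature, we prescribe the periodic matrix solution \(B\) and read off \(R_T\). We work on \(S^{4m-1}\times S^3\), with the leaves being the orbits of a circle action that combines the Hopf action on \(S^{4m-1}\) with a rotation of \(S^3\). We place \(B\) inside a one-parameter family deforming the scalar case \(B=0\), \(R_T=\Id\); the scalar case is excluded globally only by the Ferus argument. Whatever local anisotropic family is used for Theorem~\ref{thm:main} and Theorem~\ref{thm:parallel-anisotropic}, we rescale it so that the anisotropy parameter \(\varepsilon\to0\).

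\emph{Step 1: the ansatz.} Along each leaf of length \(L\), take \(B(t)\) in block form on the normal bundle, split as \(\R\oplus(\text{rest})\) for the \(N-2\) directions and one distinguished direction. The equation \(B'+B^2+R_T=0\) becomes a family of \(2\times2\) Riccati equations in rotating frames, of the form \(B=\varepsilon J\)-type plus a small symmetric periodic correction. With parameter \(\varepsilon\) we should get
\[
R_T=\Id+O(\varepsilon),\qquad L=2\pi+O(\varepsilon).
\]
This is the closed analogue of Rovenskii's pinched tubular models, except that the rotation comes from the global Hopf/\(S^3\) twisting and not from a tube.

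\emph{Step 2: pinching.} In our explicit metric (warped Berger-type deformation of the product), we compute \(\Kmix(T,X)=\langle R_T X,X\rangle\) for unit normal \(X\). The formula for \(g_\varepsilon\) from the construction gives \(\min\Kmix\) and \(\max\Kmix\) as explicit functions of \(\varepsilon\), both tending to the same constant. Hence \(\delta_{\mathrm{mix}}\to1\), and we choose \(\varepsilon\) so that \(\delta_{\mathrm{mix}}>\delta\). The estimate must be uniform over the whole closed manifold, not just near one leaf. Because the construction is homogeneous under the isometry group \(Sp(m)\times Sp(1)\) or \(U\)-type group, it reduces to a compact set of orbit parameters on which the error terms are continuous in \(\varepsilon\). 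The fixed point submanifolds \(S^{4m-1}\times S^1\) are totally geodesic and contain the leaves, so restriction only shrinks the set of mixed planes. Pinching therefore passes to them, which covers dimensions \(4m\) as well as \(4m+2\), i.e. all even \(N\ge4\).

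\emph{Step 3: length normalization.} After scaling so that \(\max\Kmix=1\), the leaf length is \(L_\varepsilon\sqrt{\max\Kmix(\varepsilon)}\). Both factors are explicit, and we check that the product tends to \(2\pi\) as \(\varepsilon\to0\).

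\emph{Main obstacle.} The hardest part is keeping \(R_T\) non-parallel and non-scalar while \(\delta_{\mathrm{mix}}\to1\), and doing so uniformly, including near the degenerate orbits of the \(S^3\) factor, where the normal frame of Step 1 becomes singular. I would first verify smoothness and uniform estimates there by writing \(g_\varepsilon\) in invariant left-invariant coframes on \(S^3\), which are global. Non-parallelism would follow from exhibiting one leaf on which an eigenvalue of \(R_T\) varies at order \(\varepsilon\).
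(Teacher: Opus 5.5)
Your proposal has a genuine gap. It never exhibits a family with \(\delta_{\mathrm{mix}}\to1\), and the mechanism sketched in Step~1 cannot produce one.

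\emph{Step 1 is impossible as stated.} By Lemma~\ref{lem:riccati}, \(R_T=-(B'+B^2)\). So \(B=0\) gives \(R_T=0\), not \(\Id\), and a conullity of the form ``\(\varepsilon J\) plus a small correction'' gives a small \(R_T\), not \(\Id+O(\varepsilon)\). Worse, no bounded \(B\) can work.
\begin{itemize}
\item Suppose \(B\) is block diagonal for an orthogonal splitting \(\R e\oplus e^\perp\) along a closed leaf, as your decomposition suggests. Then \(b=\langle Be,e\rangle\) is periodic and satisfies \(b'+b^2+\langle R_Te,e\rangle=0\). Integrating over a period gives \(\int b^2+\int\langle R_Te,e\rangle=0\), which is absurd.
\item Suppose instead the conullity tensors stay bounded (with \(\max\Kmix=1\)) while \(R_T\to\Id\) uniformly. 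Arzel\`a--Ascoli along leaves then yields a solution of \(B'+B^2+\Id=0\) on all of \(\R\) in odd normal rank. The Ferus argument of Section~\ref{sec:preliminaries} forbids this.
\end{itemize}
Hence any almost \(1\)-pinched family must have unbounded conullity and cannot be a perturbation of a scalar model. In the paper \(\|B^{\mathrm{sym}}\|_{\operatorname{op}}^2\sim1/(16\varepsilon)\) and \(h_\varepsilon\to0\), so the metrics themselves degenerate.

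\emph{Steps 2 and 3 assume the conclusion.} They assert that ``the formula for \(g_\varepsilon\)'' gives \(\min\Kmix\) and \(\max\Kmix\) with a common limit and the right normalized lengths. That is the whole content of the theorem, and no formula is given. Nor can it come from sending an anisotropy parameter to zero in the existing families:
\begin{itemize}
\item the parallel subfamily has \(\delta_{\mathrm{mix}}=1/4\) for all parameters;
\item in the family of Proposition~\ref{prop:S4m-S3}, letting \(A\to C\) with \(r=q/h\) fixed sends the \(\nu\nu\)-entry \(r^2(A-C)(A+3C)/4\) of \(R_3\) to \(0\), so \(\delta_{\mathrm{mix}}\to0\).
\end{itemize}

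\emph{The missing idea is a coupled limit.} Take \(C=1\), \(A=1+\varepsilon\), and
\(r_\varepsilon^2=\frac{AC}{(A-C)(A+3C)}=\frac{1+\varepsilon}{\varepsilon(4+\varepsilon)}\to\infty\). This is realized by \(h_\varepsilon=\sqrt{(1-p_0^2)/(1+r_\varepsilon^2)}\) and \(q_\varepsilon=r_\varepsilon h_\varepsilon\); also set \(D_\varepsilon=\sqrt{1+\varepsilon}/p_0\). In Lemma~\ref{lem:S4m-S3-curvature-block}:
\begin{itemize}
\item the \(\nu\nu\)-entry is exactly \((1+\varepsilon)/4\);
\item the off-diagonal entry is \(-3\varepsilon r_\varepsilon/4=O(\sqrt\varepsilon)\);
\item the \(u_2\)- and \(u_3\)-entries are \((1+\varepsilon^2r_\varepsilon^2)/4\) and \((1+4\varepsilon)/4\);
\item the \(v_2,v_3\)- and \(\D\)-blocks are \((1+\varepsilon)/4\).
\end{itemize}
Hence \(R_T\to\frac14\Id\). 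All coefficients are constants in a global frame: \(T=u_1\) acts on the sphere factor only, and the \(v_\alpha\) are left-invariant. So uniformity over \(M\) is automatic, and your concern about degenerate \(S^3\)-orbits does not arise. Finally \(\ell_{\mathrm{leaf}}=4\pi/\sqrt{1+\varepsilon}\), so \(\sqrt{\lambda_{\max}}\,\ell_{\mathrm{leaf}}\to\frac12\cdot4\pi=2\pi\).

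Your passage to the fixed point submanifolds \(S^{4m-1}\times S^1\) by restriction is correct and is what the paper does.
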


Consequently, no fixed pinching constant \(\delta<1\) restores the
Ferus--Adams estimate outside the bundle-like setting.  In the first
subfamily \(R_T\) is parallel but anisotropic; in the second it is almost
scalar but its eigenspaces rotate in a parallel normal frame.  The final
section uses the symmetric part of the conullity tensor to formulate a
quantitative problem separating these examples from the bundle-like case.

\section{The Riccati mechanism and the bundle-like case}
\label{sec:preliminaries}

Write \(\Vv=T\F\) and \(\Hh=\Vv^\perp\).  We use
\[
R(X,Y)Z=\nabla_X\nabla_YZ-\nabla_Y\nabla_XZ-\nabla_{[X,Y]}Z,
\]
so the round sphere has positive curvature.  For a totally geodesic
foliation and \(X\in\Vv\), the conullity tensor is
\[
        B_X(U)=(\nabla_UX)^{\Hh},\qquad U\in\Hh.
\]
Some authors use the opposite sign.  In the one-dimensional case, let \(T\)
be the unit leaf field.  Then \(\nabla_TT=0\), \(B(U)=\nabla_UT\), and
\[
        R_TU=R(U,T)T,
        \qquad
        \Kmix(T,U)=\frac{\langle R_TU,U\rangle}{|U|^2}.
\]
Thus \(\Kmix>0\) is equivalent to \(R_T>0\).

\begin{lemma}[Riccati equation]\label{lem:riccati}
Along every integral curve of \(T\), after normal parallel identification,
\[
        B'+B^2+R_T=0.
\]
\end{lemma}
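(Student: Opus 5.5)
The plan is a direct computation of the covariant derivative of \(B\) along \(T\), followed by a projection to \(\Hh\). Since the leaves are geodesics (\(\nabla_TT=0\)), the field \(T\) is tangent to the leaves and \(\Hh=T^\perp\) is preserved by parallel transport along each leaf. So normal parallel transport along an integral curve of \(T\) agrees with Levi-Civita parallel transport. The derivative \(B'\) is therefore \((\nabla_TB)\) restricted to \(\Hh\), defined by
\[
        (\nabla_TB)(U)=\nabla_T(B(U))-B(\nabla_TU),
\]
and evaluated at \(U\in\Hh\). Since \(\nabla_TU\in\Hh\) whenever \(U\in\Hh\), and \(B(U)=\nabla_UT\in\Hh\) (because \(\langle\nabla_UT,T\rangle=\tfrac12U|T|^2=0\)), no projections are needed at this stage.

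For the computation, extend \(U\) to a field with \(U\perp T\). Then
\[
        (\nabla_TB)U=\nabla_T\nabla_UT-\nabla_{\nabla_TU}T.
\]
By the curvature convention, \(\nabla_T\nabla_UT=R(T,U)T+\nabla_U\nabla_TT+\nabla_{[T,U]}T\). The middle term vanishes because \(\nabla_TT=0\). Writing \([T,U]=\nabla_TU-\nabla_UT\), we get
\[
        (\nabla_TB)U=R(T,U)T-\nabla_{\nabla_UT}T=-R(U,T)T-B(B(U)).
\]
The last step uses that \(\nabla_UT=B(U)\in\Hh\), so \(\nabla_{B(U)}T=B^2U\). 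Together with \(R_TU=R(U,T)T\), this gives \(B'+B^2+R_T=0\).

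The only delicate points are bookkeeping. First, the expression must be tensorial in \(U\). It is, because \(\nabla_TB\) is a tensor, so the choice of extension of \(U\) does not matter. Second, every term must lie in \(\Hh\), so that after normal parallel identification the identity is genuinely an ODE for endomorphisms of \(\Hh\). This holds because \(R(U,T)T\perp T\) by the skew-symmetry of \(\langle R(\cdot,\cdot)\cdot,\cdot\rangle\). Finally, the sign conventions must be tracked consistently with the paper's definition of \(R\) and \(B\); I expect this to be the main place to be careful, rather than any conceptual obstacle.
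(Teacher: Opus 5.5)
Your proof is correct, but it reaches the identity by a somewhat different route from the paper.

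\textbf{The paper's route.} The paper varies the integral curve through neighbouring leaves. The variation field \(J\) is Lie-transported by the flow of \(T\), so \([T,J]=0\) and \(\nabla_TJ=\nabla_JT=BJ\). The Jacobi equation \(\nabla_T\nabla_TJ+R(J,T)T=0\) then gives \((B'+B^2+R_T)J=0\), and letting the initial vector range over \(T^\perp\) yields the operator identity.

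\textbf{Your route.} You differentiate \(B\) directly as a section of \(\operatorname{End}(\Hh)\), using an arbitrary normal extension of \(U\). The bracket term no longer vanishes. Torsion-freeness turns \(\nabla_{[T,U]}T\) into \(B(\nabla_TU)-B^2U\). This cancels the \(B(\nabla_TU)\) coming from the Leibniz rule and supplies the \(B^2\) term.

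\textbf{How they relate.} The paper's argument is your computation specialized to the extension with \([T,U]=0\). For that extension, the curvature identity you invoke is literally the Jacobi equation.

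\textbf{What each buys.} The Jacobi-field version is shorter and keeps the geometric meaning of \(B\) as the rate at which nearby leaves spread apart. However, it leaves three facts implicit:
\begin{itemize}
\item \(J\) stays normal, since \(\langle\nabla_TJ,T\rangle=\langle\nabla_JT,T\rangle=0\);
\item the Lie-transported fields span \(T^\perp\) at every point of the curve;
\item normal parallel transport along a geodesic leaf coincides with Levi-Civita transport.
\end{itemize}
Your tensorial version needs no special extension and makes all of this bookkeeping explicit. It produces directly the intrinsic form \(\nabla_T^\perp B+B^2+R_T=0\), from which the paper's moving-frame equation follows immediately.
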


\begin{proof}
Vary the integral curve through a transverse curve with initial variation
\(U_0\perp T\).  The resulting variational field \(J\) is a normal Jacobi
field, \([T,J]=0\), and \(\nabla_TJ=\nabla_JT=BJ\).  The Jacobi equation gives
\((B'+B^2+R_T)J=0\); varying \(U_0\) proves the identity.
\end{proof}

Suppose now that \(R_T=k\Id\), \(k>0\), along a complete leaf.  If \(B(0)\)
has a real eigenvalue \(\lambda_0\), then
\(C(t)=[B(t),B(0)]\) satisfies
\[
        C'=-BC-CB,\qquad C(0)=0.
\]
Hence \(B(t)\) preserves the corresponding real eigenspace.  Its restriction
starts at \(\lambda_0\Id\) and, by uniqueness, satisfies the scalar equation
\(\lambda'+\lambda^2+k=0\), whose solutions have a finite-time pole.  Thus
\(B(0)\) has no real eigenvalue.  In odd normal rank this is impossible; in
higher leaf dimension the same mechanism together with Adams' theorem gives
Ferus' bound.

\subsection{Moving and parallel normal frames}

Let \(N_1,\ldots,N_n\) be an orthonormal normal frame along a leaf and write
\(\nabla_TN_j=\sum_i\Omega_{ij}N_i\), with
\(\Omega^{\mathsf T}=-\Omega\).  If \(B_0\) and \(R_0\) are the matrices in
this frame, then
\[
        B_0'+[\Omega,B_0]+B_0^2+R_0=0.
\]
For the solution \(P'=-\Omega P\), \(P(0)=\Id\), the parallel-frame matrices
are
\[
        B=P^{\mathsf T}B_0P,
        \qquad
        R_T=P^{\mathsf T}R_0P,
\]
and
\begin{equation}\label{eq:parallel-derivative}
        \frac{d}{dt}R_T
        =P^{\mathsf T}(R_0'+[\Omega,R_0])P.
\end{equation}
Equivalently, for a normal field \(U\),
\[
 ((\nabla_T^\perp R_T)U)
 =\nabla_T^\perp(R_TU)-R_T(\nabla_T^\perp U).
\]
In the homogeneous frames below, \(B_0,R_0,\Omega\) are constant.  Hence
\(R_T\) is parallel precisely when \([\Omega,R_0]=0\); otherwise
\(R_T(t)=e^{t\Omega}R_0e^{-t\Omega}\).

\subsection{Bundle-like metrics}

For a one-dimensional geodesic foliation,
\begin{equation}\label{eq:Lie-B}
        (\mathcal L_Tg)(U,V)
        =\langle BU,V\rangle+\langle U,BV\rangle
        \qquad(U,V\perp T).
\end{equation}
Thus the metric is bundle-like if and only if \(B\) is skew-symmetric.
The following standard argument records the positive side of Toponogov's
question \cite[Sec.~3.6.3]{Rovenskii2000}.

\begin{proposition}[Bundle-like case]\label{prop:bundle-like-Ferus}
Let \(\F^p\) be a bundle-like totally geodesic foliation on
\(M^{p+n}\).  If \(\Kmix>0\), then \(p\le\rh(n)-1\).
\end{proposition}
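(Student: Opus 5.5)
The plan is to use the fatness argument for Riemannian submersions. Since the metric is bundle-like and the leaves are totally geodesic, the foliation is locally given by Riemannian submersions \(\pi\colon U\to \bar U\) whose fibers are the plaques. For such a submersion the horizontal distribution is \(\Hh\), and the O'Neill tensor is \(A_XY=\tfrac12[X,Y]^{\Vv}\) for horizontal \(X,Y\). Its adjoint \(A^*_XV=-(\nabla_XV)^{\Hh}\) for \(V\in\Vv\) coincides, up to sign, with the conullity tensor \(B_V\). Equivalently, by \eqref{eq:Lie-B} and its higher-rank analogue, bundle-likeness means that each \(B_V\) is skew-symmetric. In that case \(\langle B_VX,Y\rangle=-\langle (\nabla_XY)^{\Vv},V\rangle\) is skew in \(X,Y\), so \(B_V=\pm A^*_V\).

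With totally geodesic fibers, O'Neill's tensor \(T\) vanishes. O'Neill's formula for a mixed plane then gives
\[
K(X,V)=|A^*_VX|^2=|B_VX|^2
\]
for orthonormal \(X\in\Hh\) and \(V\in\Vv\). The terms involving \(\nabla T\) and \(T\) drop out because the fibers are totally geodesic. As a cross-check in the one-dimensional case: for skew \(B\) the symmetric part of the Riccati equation of Lemma~\ref{lem:riccati} gives \(R_T=-B^2=B^{\mathsf T}B\), since \(B'\) is skew. Hence \(\Kmix>0\) is equivalent to \(B_VX\neq0\) for all nonzero \(V\in\Vv_x\) and \(X\in\Hh_x\). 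This is exactly fatness.

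Next fix a point \(x\) and a unit vector \(X\in\Hh_x\), and consider the linear map \(\Phi_X\colon \Vv_x\to\Hh_x\), \(V\mapsto B_VX\). This map is injective by the previous step. Its image is orthogonal to \(X\), because \(\langle B_VX,X\rangle=0\) by skew-symmetry. Choose an orthonormal basis \(V_1,\dots,V_p\) of \(\Vv_x\). Then the assignments \(X\mapsto B_{V_i}X\) are linear maps on \(\Hh_x\cong\R^n\), and they give \(p\) vector fields on \(S^{n-1}\) that are tangent to the sphere and pointwise linearly independent. Independence follows because a vanishing combination \(\sum c_iB_{V_i}X=B_{\sum c_iV_i}X=0\) forces all \(c_i=0\). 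Adams' theorem \cite{Adams1962} then gives \(p\le\rh(n)-1\).

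The one delicate point is identifying positive mixed curvature with fatness. This requires O'Neill's formula with the correct sign conventions, and a check that the bundle-like hypothesis really gives a local Riemannian submersion whose horizontal space is \(\Hh\). The latter is standard, since the plaque spaces carry the induced transverse metric. To avoid quoting O'Neill, I would instead derive \(\Kmix(X,V)=|B_VX|^2\) directly. The plan is to polarize the Riccati identity \(R_V=-B_V^2-\nabla_VB_V\) and observe that \(\nabla_VB_V\) is skew, hence contributes nothing to \(\langle R_VX,X\rangle\). Here \(\nabla_VB_V\) is skew because skew-symmetry of \(B\) is preserved along leaves, as \(\mathcal L_Vg\) vanishes on \(\Hh\).
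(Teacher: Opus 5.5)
Your proposal is correct and follows essentially the same route as the paper. You pass to a local Riemannian submersion with totally geodesic fibers, use O'Neill's formula \(\Kmix(X,V)=|\mathcal A_XV|^2\), and apply Adams' theorem to the fields \(X\mapsto\mathcal A_XV_i\) (these are exactly your \(B_{V_i}X\)), which are tangent to \(S^{n-1}\) by skew-symmetry and pointwise independent by positivity; your identification of \(\mathcal A\) with the conullity tensor and the Riccati cross-check \(R_T=B^{\mathsf T}B\) are consistent extras that the paper's argument does not need.
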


\begin{proof}
Locally the foliation is a Riemannian submersion with totally geodesic
fibers.  Fix a point, an orthonormal vertical basis
\(V_1,\ldots,V_p\), and let \(\mathcal A\) be O'Neill's tensor.
For \(X\in S(\Hh)\cong S^{n-1}\), set
\[
        Z_i(X)=\mathcal A_XV_i.
\]
Since \(\mathcal A_X\) is skew-adjoint and \(\mathcal A_XX=0\),
\(\langle Z_i(X),X\rangle
=-\langle V_i,\mathcal A_XX\rangle=0\);
hence \(Z_i(X)\in T_XS(\Hh)\).
O'Neill's mixed-curvature formula gives
\(\Kmix(X,V)=|\mathcal A_XV|^2\) for unit \(X,V\).  Hence a relation
\(\sum_i a_iZ_i(X)=0\) would imply
\(\mathcal A_X(\sum_i a_iV_i)=0\), and positivity forces all \(a_i=0\).
Thus \(Z_1,\ldots,Z_p\) are pointwise independent vector fields on
\(S^{n-1}\), and Adams' theorem yields the claim
\cite{ONeill1966,Weinstein1980,FloritZiller2011}.
\end{proof}

\section{Hopf--Berger data on quaternionic spheres}
\label{sec:hopf-berger-data}

Let \(g_0\) be the round metric of curvature \(1\) on
\(S^{4m-1}\subset\HH^m\), and let
\[
        \xi_1(x)=xi,\qquad \xi_2(x)=xj,\qquad \xi_3(x)=xk.
\]
These orthonormal Killing fields satisfy
\[
 [\xi_1,\xi_2]=2\xi_3,\qquad
 [\xi_2,\xi_3]=2\xi_1,\qquad
 [\xi_3,\xi_1]=2\xi_2.
\]
Put \(\eta_\alpha=g_0(\xi_\alpha,\cdot)\) and
\[
        \D=\bigcap_{\alpha=1}^3\ker\eta_\alpha.
\]
Right multiplication induces orthogonal skew endomorphisms
\(J_1,J_2,J_3\) of \(\D\).  For \(X,Y\in\Gamma(\D)\),
\begin{equation}\label{eq:hopf-horizontal-brackets}
 \langle[X,Y],\xi_\alpha\rangle_{g_0}
 =-2\langle J_\alpha X,Y\rangle_{g_0},
 \qquad
 \langle[\xi_\alpha,X],\xi_\beta\rangle_{g_0}=0.
\end{equation}
For \(m=1\), \(\D=0\).

Fix \(A>C>0\), set
\[
        L=\frac2{\sqrt{AC}},\qquad M=\frac2A,
\]
and define
\[
        g_{A,C}=L^2g_0+(M^2-L^2)\eta_2^2.
\]
The fields
\[
 u_1=\frac{\sqrt{AC}}2\xi_1,\qquad
 u_2=\frac A2\xi_2,\qquad
 u_3=\frac{\sqrt{AC}}2\xi_3
\]
are \(g_{A,C}\)-orthonormal and satisfy
\begin{equation}\label{eq:berger-brackets}
 [u_1,u_2]=Au_3,\qquad [u_2,u_3]=Au_1,\qquad [u_3,u_1]=Cu_2.
\end{equation}
Write \(\beta_\alpha=g_{A,C}(u_\alpha,\cdot)\).  Koszul's formula and
\eqref{eq:berger-brackets} give \(\nabla_{u_1}u_1=0\).  Since the
\(\xi_1\)-flow has period \(2\pi\), the unit \(u_1\)-orbits have length
\begin{equation}\label{eq:leaf-length}
        \ell_{\mathrm{leaf}}=\frac{4\pi}{\sqrt{AC}}.
\end{equation}

Along a \(u_1\)-orbit choose a \(u_1\)-invariant orthonormal frame of
\(\D\).  After the above scaling, \eqref{eq:hopf-horizontal-brackets} becomes
\begin{equation}\label{eq:D-bracket-u1}
 \langle[X,Y],u_1\rangle_{g_{A,C}}
 =-\sqrt{AC}\,\langle J_1X,Y\rangle_{g_{A,C}}.
\end{equation}

\begin{lemma}[The horizontal block]\label{lem:D-block-identities}
Set \(T=u_1\). Along each \(u_1\)-orbit, every \(u_1\)-invariant
local section \(X\) of \(\D\) satisfies
\[
\nabla_XT=\nabla_TX=\frac{\sqrt{AC}}2J_1X,
\qquad
R(X,T)T=\frac{AC}{4}X.
\]
\end{lemma}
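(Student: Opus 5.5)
The plan is to compute everything with the Koszul formula for \(g=g_{A,C}\), using \(u_1\)-invariant sections \(X,Y\) of \(\D\), so that \([T,X]=[T,Y]=0\). Two preliminary facts are needed.

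First, on \(\D\) the metric is \(L^2g_0\), because \(\eta_2\) vanishes on \(\D\). The flow of \(\xi_1\) is \(x\mapsto xe^{it}\). It is a \(g_0\)-isometry preserving each \(\xi_\alpha\)-span, hence preserving \(\D\). Consequently \(g(X,Y)\) is constant along the orbit, so \(T\langle X,Y\rangle=0\).

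Second, \(J_1\) (right multiplication by \(i\)) commutes with this flow, since \(i\) commutes with \(e^{it}\). Hence \(J_1X\) is again a \(u_1\)-invariant section of \(\D\). Also \(\langle X,T\rangle=\langle X,u_\beta\rangle=0\) identically.

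Next, determine \(\nabla_XT\) component by component.

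For a \(\D\)-component, take \(Y\in\D\). All derivative terms in Koszul vanish, as do \(\langle[X,T],Y\rangle\) and \(\langle[T,Y],X\rangle\). This leaves
\[
2\langle\nabla_XT,Y\rangle=-\langle[X,Y],T\rangle=\sqrt{AC}\,\langle J_1X,Y\rangle
\]
by \eqref{eq:D-bracket-u1}.

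The \(T\)-component vanishes because \(|T|=1\).

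For the \(u_\beta\)-components (\(\beta=2,3\)), Koszul gives
\[
2\langle\nabla_XT,u_\beta\rangle=\langle[X,T],u_\beta\rangle-\langle[X,u_\beta],T\rangle-\langle[T,u_\beta],X\rangle .
\]
Each term vanishes. The first does because \([X,T]=0\). The second does by \eqref{eq:hopf-horizontal-brackets}, which says \([\xi_\alpha,X]\) has no \(\xi_\beta\)-component. The third does because \([T,u_\beta]\in\spanop\{u_\alpha\}\perp\D\) by \eqref{eq:berger-brackets}.

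This is the step I expect to need the most care. \(T\) is not Killing for \(g_{A,C}\): its flow rotates \(\xi_2\) into \(\xi_3\), so \(\eta_2\) is not \(u_1\)-invariant. I therefore cannot quote Killing-field identities. Instead I must check directly that the anisotropic \(\eta_2^2\) term never pairs \(\D\) with the vertical directions. It does not, because \(\eta_2|_{\D}=0\) and all mixed brackets have the right orthogonality.

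With these components, \(\nabla_XT=\frac{\sqrt{AC}}2J_1X\). Torsion-freeness and \([T,X]=0\) then give \(\nabla_TX=\nabla_XT\).

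For the curvature, use \(\nabla_TT=0\) and \([X,T]=0\):
\[
R(X,T)T=\nabla_X\nabla_TT-\nabla_T\nabla_XT-\nabla_{[X,T]}T=-\nabla_T\Bigl(\tfrac{\sqrt{AC}}2J_1X\Bigr).
\]
Since \(J_1X\) is itself a \(u_1\)-invariant section of \(\D\), the first identity applies to it. This gives \(\nabla_T(J_1X)=\frac{\sqrt{AC}}2J_1^2X=-\frac{\sqrt{AC}}2X\). Therefore \(R(X,T)T=\frac{AC}4X\).
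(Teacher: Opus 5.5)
Your proof is correct and follows the same route as the paper. You pair Koszul's formula against \(u_1\)-invariant \(Y\in\D\) using the bracket identity for \([X,Y]\), get \(\nabla_TX\) from torsion-freeness, and reapply the first identity to the invariant section \(J_1X\) to get the curvature. Beyond the paper, you spell out the vanishing of the \(u_\alpha\)-components and the constancy of \(\langle X,Y\rangle\) along the orbit, which the paper leaves implicit.

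One small wording slip: the \(\xi_1\)-flow preserves the three-plane \(\spanop\{\xi_1,\xi_2,\xi_3\}\), and hence \(\D\), but not each line \(\R\xi_\alpha\). As you note yourself later, it rotates \(\xi_2\) into \(\xi_3\). Only the three-plane is needed, so the argument is unaffected.
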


\begin{proof}
In the invariant frame, \([T,X]=0\).  Pairing Koszul's formula with
\(Y\in\D\) and using \eqref{eq:D-bracket-u1} gives the first identity;
torsion-freeness gives the second.  Since \(J_1X\) is also \(u_1\)-invariant, applying the same formula
to \(J_1X\) and using \(J_1^2=-\Id\) yields the curvature identity.
\end{proof}
\section{\texorpdfstring{The \(S^{4m-1}\times S^3\) construction}{The S4m-1 x S3 construction}}
\label{sec:S4m-times-S3}

We first construct the higher-dimensional family in dimensions
\[
        N=4m+2,\qquad m\ge1.
\]
The \(4m\)-dimensional family will be obtained in the next section as a
totally geodesic fixed point submanifold of these examples.

The underlying closed manifold is
\[
        M_m=S^{4m-1}\times S^3 .
\]
All vector fields and one-forms from the factors will be pulled back to the
product without changing notation.

On \(S^{4m-1}\) we use the Hopf--Berger data from
Section~\ref{sec:hopf-berger-data}.  Thus
\[
        A>C>0,
\]
and the \(g_{A,C}\)-orthonormal Hopf fields
\[
        u_1,u_2,u_3
\]
satisfy
\[
        [u_1,u_2]=A u_3,\qquad
        [u_2,u_3]=A u_1,\qquad
        [u_3,u_1]=C u_2 .
\]
The leaf direction will be
\[
        T=u_1 .
\]

Identify \(S^3\) with the group \(Sp(1)\) of unit quaternions.  On the second
factor choose the bi-invariant metric for which the left-invariant frame
\[
        v_1,v_2,v_3
\]
whose values at the identity are positive multiples of \(i,j,k\), respectively,
is orthonormal and satisfies
\[
        [v_1,v_2]=D v_3,\qquad
        [v_2,v_3]=D v_1,\qquad
        [v_3,v_1]=D v_2,
\]
where \(D>0\).
For the round unit \(S^3\) one has \(D=2\), but the precise value will not
matter.

Let
\[
        \beta_1=g_{A,C}(u_1,\cdot),
        \qquad
        \beta_3=g_{A,C}(u_3,\cdot),
\]
and let
\[
        \sigma_1=g_{S^3}(v_1,\cdot).
\]
These are the dual one-forms for the unperturbed product metric.

Choose real constants
\[
        p,q\in\mathbb R
\]
such that
\[
        p\ne0,\qquad q\ne0,\qquad p^2+q^2<1.
\]
Define a metric \(g\) on
\[
        S^{4m-1}\times S^3
\]
by
\begin{equation}
\label{eq:S4mS3-metric}
\begin{aligned}
        g
        &=
        g_{A,C}
        +
        g_{S^3}
        +
        p(\beta_1\otimes\sigma_1+\sigma_1\otimes\beta_1)       \\
        &\hspace{2cm}
        +
        q(\beta_3\otimes\sigma_1+\sigma_1\otimes\beta_3).
\end{aligned}
\end{equation}
Thus the only cross terms are
\[
        \langle v_1,u_1\rangle=p,
        \qquad
        \langle v_1,u_3\rangle=q .
\]
All other cross terms between the two factors vanish.
We shall denote this metric by \(g_{p,q}\).

The metric is positive definite.  Indeed, the only nontrivial Gram block is
on
\[
        \operatorname{span}\{u_1,u_3,v_1\},
\]
where the matrix is
\[
        \begin{pmatrix}
        1&0&p\\
        0&1&q\\
        p&q&1
        \end{pmatrix}.
\]
Its determinant is
\[
        1-p^2-q^2>0,
\]
and its leading principal minors are positive.

Set
\[
        h=\sqrt{1-p^2-q^2}.
\]
Then
\[
        \nu=\frac{v_1-pu_1-qu_3}{h}
\]
is a unit vector orthogonal to
\[
        u_1,u_2,u_3,v_2,v_3,\D.
\]
Since
\[
        v_1=h\nu+pT+qu_3,
\]
an adapted orthonormal frame is
\[
        T=u_1,\qquad
        \nu,\ u_2,\ u_3,\ v_2,\ v_3,\ \D .
\]

Because vector fields from different factors commute, the relevant brackets
in this frame are the following.  From the Hopf--Berger block,
\begin{equation}
\label{eq:S4mS3-hopf-brackets}
        [T,u_2]=A u_3,\qquad
        [u_2,u_3]=A T,\qquad
        [u_3,T]=C u_2 .
\end{equation}
The brackets involving \(\nu\) and the first factor are
\begin{equation}
\label{eq:S4mS3-nu-hopf-brackets}
        [\nu,T]=-\frac{qC}{h}u_2,\qquad
        [\nu,u_2]=\frac{qA}{h}T-\frac{pA}{h}u_3,\qquad
        [\nu,u_3]=\frac{pC}{h}u_2 .
\end{equation}
The brackets involving the \(S^3\)-directions are
\begin{equation}
\label{eq:S4mS3-S3-brackets}
        [\nu,v_2]=\frac{D}{h}v_3,
        \qquad
        [\nu,v_3]=-\frac{D}{h}v_2,\qquad
        [v_2,v_3]=D(h\nu+pT+qu_3).
\end{equation}

\subsection{The circle fibration}

The foliation \(\F\) is generated by
\[
        T=u_1 .
\]
Since \(u_1\) is a constant multiple of the quaternionic Hopf field
\(\xi_1\), its flow is periodic.  Hence the leaves are closed circles.
The quotient is the smooth fibration
\[
        S^1
        \longrightarrow
        S^{4m-1}\times S^3
        \longrightarrow
        \CP^{2m-1}\times S^3 .
\]
By Section~\ref{sec:hopf-berger-data}, every leaf has length
\begin{equation}
\label{eq:S4mS3-leaf-length}
        \ell_{\mathrm{leaf}}
        =
        \frac{4\pi}{\sqrt{AC}} .
\end{equation}

We check geodesicity.  Since \(T\) is unit, it is enough to prove
\[
        \nabla_TT=0.
\]
For \(Y\perp T\), Koszul's formula gives
\[
        2\langle\nabla_TT,Y\rangle
        =
        -2\langle [T,Y],T\rangle .
\]
Using the brackets above,
\[
        [T,\nu]=\frac{qC}{h}u_2,
        \qquad
        [T,u_2]=A u_3,
        \qquad
        [T,u_3]=-C u_2,
\]
and
\[
        [T,v_2]=[T,v_3]=0.
\]
All these vectors are orthogonal to \(T\).  For \(X\in\D\), we use the
adapted Hopf-invariant frame of Section~\ref{sec:hopf-berger-data}, for
which
\[
        [T,X]=0.
\]
Therefore
\[
        \langle [T,Y],T\rangle=0
        \qquad
        (Y\perp T),
\]
and hence
\[
        \nabla_TT=0.
\]
Thus \(\F\) is a one-dimensional totally geodesic foliation.

\subsection{The mixed curvature operator}

The normal bundle of \(\F\) splits orthogonally as
\[
        T^\perp=E\oplus\D,
\]
where
\[
        E=\operatorname{span}\{\nu,u_2,u_3,v_2,v_3\}.
\]
Let
\[
        R_T:T^\perp\to T^\perp,
        \qquad
        R_T(U)=R(U,T)T
\]
be the mixed curvature operator.

We first record the finite-dimensional connection table.

\begin{lemma}
\label{lem:S4m-S3-connection}
In the finite-dimensional block
\[
        E=\operatorname{span}\{\nu,u_2,u_3,v_2,v_3\},
\]
one has
\[
        \nabla_\nu T
        =
        -\frac{q(A+C)}{2h}u_2,
\]
\[
        \nabla_{u_2}T
        =
        \frac{q(A-C)}{2h}\nu
        -
        \frac{2A-C}{2}u_3,
\]
\[
        \nabla_{u_3}T=\frac C2u_2,
\]
and
\[
        \nabla_{v_2}T=-\frac{Dp}{2}v_3,
        \qquad
        \nabla_{v_3}T=\frac{Dp}{2}v_2.
\]
Moreover,
\[
        \nabla_T\nu
        =
        -\frac{q(A-C)}{2h}u_2,
\]
\[
        \nabla_Tu_2
        =
        \frac{q(A-C)}{2h}\nu
        +
        \frac C2u_3,
\]
\[
        \nabla_Tu_3=-\frac C2u_2,
\]
and
\[
        \nabla_Tv_2=-\frac{Dp}{2}v_3,
        \qquad
        \nabla_Tv_3=\frac{Dp}{2}v_2.
\]
These covariant derivatives have no \(\D\)-component.
\end{lemma}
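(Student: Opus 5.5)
The frame \(T,\nu,u_2,u_3,v_2,v_3\) together with a \(u_1\)-invariant orthonormal frame of \(\D\) is \(g\)-orthonormal, and all mutual brackets are known. So every connection coefficient can be computed from Koszul's formula in the orthonormal-frame form
\[
 2\langle\nabla_XY,Z\rangle
 =\langle[X,Y],Z\rangle-\langle[Y,Z],X\rangle+\langle[Z,X],Y\rangle .
\]
The brackets needed are \eqref{eq:S4mS3-hopf-brackets}, \eqref{eq:S4mS3-nu-hopf-brackets} and \eqref{eq:S4mS3-S3-brackets}, together with \([T,v_2]=[T,v_3]=0\) (fields from different factors commute, and \(T\) is a first-factor field). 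The plan is to compute \(\nabla_YT\) for \(Y\in\{\nu,u_2,u_3,v_2,v_3\}\) first. Then I would obtain \(\nabla_TY\) by torsion-freeness, \(\nabla_TY=\nabla_YT+[T,Y]\), using the list of brackets \([T,\cdot]\) already recorded in the geodesicity check.

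For \(\nabla_YT\), I pair with each frame vector \(Z\). Since \(|T|=1\), \(\langle\nabla_YT,T\rangle=0\), so only \(Z\perp T\) matters, and the formula becomes
\[
 2\langle\nabla_YT,Z\rangle=\langle[Y,T],Z\rangle-\langle[T,Z],Y\rangle+\langle[Z,Y],T\rangle .
\]
For \(Y=\nu\), the only nonzero contributions come from \(Z=u_2\): \([\nu,T]=-\tfrac{qC}{h}u_2\), \(-\langle[T,u_2],\nu\rangle=0\), and \(\langle[u_2,\nu],T\rangle=-\tfrac{qA}{h}\). This gives \(-\tfrac{q(A+C)}{2h}\).

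For \(Y=u_2\), the pairing with \(\nu\) gives \(\tfrac{q(A-C)}{2h}\) after a sign bookkeeping with \([T,\nu]=\tfrac{qC}{h}u_2\). The pairing with \(u_3\) gives \(\tfrac12(-A-A+C)\), which matches \(-\tfrac{2A-C}{2}\).

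The \(u_3\) case is the Berger computation and gives \(\tfrac C2u_2\). For \(Y=v_2,v_3\), the only nonzero term is \(\langle[v_3,v_2],T\rangle=-Dp\), coming from the \(pT\)-component of \([v_2,v_3]\). This produces \(\mp\tfrac{Dp}{2}\).

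I must also verify that no other components appear. First, pairings with \(v_2,v_3\) vanish for \(Y\in\{\nu,u_2,u_3\}\): the \(\nu\)–\(v\) brackets rotate \(v_2,v_3\) but never involve \(T\), and \(\langle[v_2,\nu],T\rangle=0\). Second, the \(\D\)-components vanish: for \(X\in\D\) in the invariant frame, \([T,X]=0\), and the brackets of \(X\) with \(\nu,u_2,u_3,v_2,v_3\) have no \(T\)- or \(E\)-component that survives. In particular, \(\langle[X,u_\alpha],u_1\rangle=0\) by \eqref{eq:hopf-horizontal-brackets}, and \(\nu\) has second-factor part \(v_1/h\), which commutes with \(X\). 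One subtlety here is that \(\nu\) contains \(-pu_1-qu_3\), and \([u_3,X]\) may have a \(\D\)-component. However, it pairs against \(T\) or \(E\) only through \(\langle[\xi_\alpha,X],\xi_\beta\rangle=0\), so all such terms vanish.

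Finally, the torsion step gives the second list. For example, \(\nabla_T\nu=\nabla_\nu T+[T,\nu]=\big(-\tfrac{q(A+C)}{2h}+\tfrac{qC}{h}\big)u_2=-\tfrac{q(A-C)}{2h}u_2\), and \(\nabla_Tu_2=\nabla_{u_2}T+Au_3\) gives \(\tfrac C2u_3\). The remaining cases follow in the same way. I expect the main obstacle to be sign and bookkeeping errors in the \(\nu\)-brackets, since \(\nu\) mixes both factors. I would double-check each result by confirming that \(\nabla_T\) is skew on \(E\), as it must be because the frame is orthonormal, and this holds for the stated formulas.
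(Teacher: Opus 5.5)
Your proposal is correct and follows essentially the paper's route: Koszul's formula in bracket form for the frame with constant inner products, with the $\D$-components eliminated because the factors commute and Hopf--$\D$ brackets have no Hopf component. The only difference is organizational, and your coefficients check out. You obtain the $\nabla_T$-list from the $\nabla_{(\cdot)}T$-list via torsion-freeness and the recorded brackets $[T,\cdot]$, whereas the paper substitutes into Koszul's formula for both lists.
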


\begin{proof}
All inner products of the adapted frame
\[
        T,\nu,u_2,u_3,v_2,v_3
\]
are constant.  Therefore Koszul's formula reduces to
\[
        2\langle \nabla_EF,G\rangle
        =
        \langle [E,F],G\rangle
        -
        \langle [F,G],E\rangle
        +
        \langle [G,E],F\rangle
\]
for \(E,F,G\) in this finite frame.  Substituting
\eqref{eq:S4mS3-hopf-brackets},
\eqref{eq:S4mS3-nu-hopf-brackets}, and
\eqref{eq:S4mS3-S3-brackets}, together with
\[
        [v_2,v_3]=D(h\nu+pT+qu_3),
\]
gives the displayed coefficients.

The absence of \(\D\)-components follows from two facts.  First, vector
fields on the \(S^3\)-factor commute with vector fields on the
\(S^{4m-1}\)-factor.  Second, brackets of Hopf fields with \(\D\)-fields have
no component in the Hopf three-plane, as recorded in
Section~\ref{sec:hopf-berger-data}.
\end{proof}

\begin{lemma}
\label{lem:S4m-S3-curvature-block}
Relative to
\(E=\spanop\{\nu,u_2,u_3\}\oplus\spanop\{v_2,v_3\}\), one has
\[
 R_T|_E=R_3\oplus\frac{D^2p^2}{4}\Id_{\spanop\{v_2,v_3\}},
\]
where, in the basis \(\nu,u_2,u_3\),
\[
R_3=
\begin{pmatrix}
\dfrac{q^2(A-C)(A+3C)}{4h^2}
&0&-\dfrac{3qC(A-C)}{4h}\\[7pt]
0&\dfrac{C^2h^2+q^2(A-C)^2}{4h^2}&0\\[7pt]
-\dfrac{3qC(A-C)}{4h}&0&\dfrac{C(4A-3C)}4
\end{pmatrix}.
\]
Moreover,
\[
        R_T|_{\D}=\frac{AC}{4}\Id_{\D},
\]
and there are no mixed entries between \(E\) and \(\D\).
\end{lemma}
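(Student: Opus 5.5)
Since \(\nabla_TT=0\), the cleanest route avoids second derivatives of the frame altogether and uses the Riccati identity of Lemma~\ref{lem:riccati} in a moving frame, as in the subsection on moving and parallel normal frames:
\[
        R_0=-B_0'-[\Omega,B_0]-B_0^2 .
\]
Here \(B_0\) is the matrix of \(U\mapsto\nabla_UT\) and \(\Omega\) the matrix of \(U\mapsto\nabla_TU\), both taken in the adapted frame \(\nu,u_2,u_3,v_2,v_3\) together with a \(u_1\)-invariant frame of \(\D\). All entries of \(B_0\) and \(\Omega\) are constant: for \(E\) they are read off from Lemma~\ref{lem:S4m-S3-connection}, and for \(\D\) from Lemma~\ref{lem:D-block-identities}. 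Hence \(B_0'=0\), and
\[
        R_0=-[\Omega,B_0]-B_0^2 .
\]
Lemma~\ref{lem:S4m-S3-connection} also says that none of these covariant derivatives has a \(\D\)-component. Conversely, \(\nabla_XT\) and \(\nabla_TX\) lie in \(\D\) for \(X\in\D\) by Lemma~\ref{lem:D-block-identities}. So both \(B_0\) and \(\Omega\) are block diagonal with respect to \(E\oplus\D\), and therefore so is \(R_0\). This already gives the absence of mixed entries between \(E\) and \(\D\).

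On \(\D\) we have \(B_0=\Omega=\tfrac{\sqrt{AC}}2J_1\). The commutator vanishes, and
\[
        -B_0^2=-\tfrac{AC}{4}J_1^2=\tfrac{AC}4\Id,
\]
in agreement with the curvature identity of Lemma~\ref{lem:D-block-identities}.

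On \(\spanop\{v_2,v_3\}\), \(B_0\) and \(\Omega\) both equal the same multiple of the rotation \(\bigl(\begin{smallmatrix}0&\,Dp/2\\-Dp/2&0\end{smallmatrix}\bigr)\). The commutator again vanishes, and \(-B_0^2=\tfrac{D^2p^2}{4}\Id\). The two lemmas show there is no coupling between \(\{v_2,v_3\}\) and \(\{\nu,u_2,u_3\}\) in either \(B_0\) or \(\Omega\).

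The remaining step is the \(3\times3\) computation on \(\nu,u_2,u_3\), with \(a=\tfrac{q(A-C)}{2h}\), \(b=\tfrac{q(A+C)}{2h}\) and the conventions of Lemma~\ref{lem:riccati}. The columns of the two matrices are
\[
        B_0=\begin{pmatrix}0&a&0\\-b&0&\tfrac C2\\0&-\tfrac{2A-C}2&0\end{pmatrix},
        \qquad
        \Omega=\begin{pmatrix}0&a&0\\-a&0&-\tfrac C2\\0&\tfrac C2&0\end{pmatrix}.
\]
We then expand \(-B_0^2-[\Omega,B_0]\) entry by entry.

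The parity structure does half the work. In both matrices the \(u_2\) index is coupled only to \(\nu\) and \(u_3\), so both are odd under \(u_2\mapsto-u_2\). Their products are therefore even, which forces the \((\nu,u_2)\) and \((u_2,u_3)\) entries of \(R_0\) to vanish. That leaves five entries to check.
\begin{itemize}
\item The \((\nu,\nu)\) entry is \(ab-a^2+ab\), which should reduce to \(\tfrac{q^2(A-C)(A+3C)}{4h^2}\).
\item The \((u_2,u_2)\) entry should simplify to \(\tfrac{C^2}{4}+a^2\), using \(h^2\) to combine terms.
\item The \((u_3,u_3)\) entry should give \(\tfrac{C(4A-3C)}4\).
\item The off-diagonal \((\nu,u_3)\) entries should combine to \(-\tfrac{3qC(A-C)}{4h}\).
\end{itemize}

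The main obstacle is purely bookkeeping. The expansion has to respect the index convention \(\nabla_TN_j=\sum_i\Omega_{ij}N_i\), and every sign has to be consistent with the Koszul coefficients. Symmetry of the resulting matrix serves as a built-in check, since the two off-diagonal \((\nu,u_3)\) entries are produced by different products. If a sign goes wrong, I would fall back on computing \(R(U,T)T=\nabla_U\nabla_TT-\nabla_T\nabla_UT-\nabla_{[U,T]}T\) directly for \(U=u_3\) as an independent check, using the bracket tables.
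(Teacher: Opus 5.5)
Your computation is correct, and it is essentially the paper's own. The moving-frame identity $R_0=-[\Omega,B_0]-B_0^2$ is just the paper's formula $R(U,T)T=-\nabla_T\nabla_UT-\nabla_{[U,T]}T$ with $[U,T]=\nabla_UT-\nabla_TU$ substituted, written as matrix algebra. Your $B_0$ and $\Omega$ coincide with the matrices the paper uses in Subsection~\ref{subsec:parallel-anisotropic}. The five entries come out as you claim. With $c=C/2$ and $e=(2A-C)/2$, the diagonal is $2ab-a^2$, $a^2+c^2$, $2ce-c^2$. The off-diagonal entries are $-3ac$ and $bc+e(a-b)$, and both equal $-3qC(A-C)/(4h)$. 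Your parity argument for the vanishing entries is fine.

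The one step not justified as written is the $\D$-block. Lemma~\ref{lem:D-block-identities} is a statement about $(S^{4m-1},g_{A,C})$, where the directions $\nu,v_2,v_3$ do not exist. So it cannot tell you that $\nabla_XT$ and $\nabla_TX$ have no $E$-component. Nor does it show that the $v_1$-coupling leaves their $\D$-component unchanged. The paper spends the second half of its proof on exactly this point.

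Within your framework the repair is short.
\begin{itemize}
\item \emph{$E$-components.} Take $W\in\{\nu,u_2,u_3,v_2,v_3\}$ and a $u_1$-invariant $X\in\Gamma(\D)$. Since $\langle X,W\rangle\equiv0$, you get $\langle\nabla_TX,W\rangle=-\langle X,\nabla_TW\rangle=0$ by the ``no $\D$-component'' clause of Lemma~\ref{lem:S4m-S3-connection}. Also $\nabla_XT=\nabla_TX$ because $[T,X]=0$.
\item \emph{$\D$-components.} The form $\sigma_1$ vanishes on the slices $S^{4m-1}\times\{y\}$, so $g$ induces $g_{A,C}$ there. Since $X$, $Y$, $T$ are tangent to the slice, the Gauss formula gives $\langle\nabla_XT,Y\rangle_g=\langle\nabla^{g_{A,C}}_XT,Y\rangle_{g_{A,C}}$. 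Now Lemma~\ref{lem:D-block-identities} genuinely applies.
\end{itemize}
This yields $B_0=\Omega=\tfrac{\sqrt{AC}}2J_1$ on $\D$. The entries are constant along orbits because $J_1X$ is again $u_1$-invariant. With this in place, your block-diagonality argument and the rest of the proof go through.
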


\begin{proof}
Since
\[
        \nabla_TT=0,
\]
we use
\[
        R(U,T)T
        =
        -
        \nabla_T\nabla_UT
        -
        \nabla_{[U,T]}T .
\]
The connection coefficients in Lemma~\ref{lem:S4m-S3-connection} give
\[
        R(\nu,T)T
        =
        \frac{q^2(A-C)(A+3C)}{4h^2}\nu
        -
        \frac{3qC(A-C)}{4h}u_3,
\]
\[
        R(u_2,T)T
        =
        \frac{C^2h^2+q^2(A-C)^2}{4h^2}u_2,
\]
\[
        R(u_3,T)T
        =
        -
        \frac{3qC(A-C)}{4h}\nu
        +
        \frac{C(4A-3C)}4u_3,
\]
and
\[
        R(v_2,T)T
        =
        \frac{D^2p^2}{4}v_2,
        \qquad
        R(v_3,T)T
        =
        \frac{D^2p^2}{4}v_3.
\]
This gives the displayed finite-dimensional block.

It remains to check the \(\mathcal D\)-block for the metric
\eqref{eq:S4mS3-metric} on the product.
Lemma~\ref{lem:D-block-identities} gives the
intrinsic Hopf--Berger identities on the sphere factor.  We now verify that
the \(S^3\)-coupling does not change them.

The metric differs from the product metric only in the finite-dimensional
block
\[
        \operatorname{span}\{T,u_2,u_3,v_1,v_2,v_3\}.
\]
The distribution \(\D\) remains orthogonal to this block, and the restriction
of \(g\) to \(\D\) is still \(g_{A,C}|_{\D}\).  The \(S^3\)-fields commute
with fields on \(S^{4m-1}\), and brackets of Hopf fields with \(\D\)-fields
have no Hopf component.

Let \(X,Y\in\D\), and take the adapted Hopf-invariant frame for which
\[
        [T,X]=[T,Y]=0.
\]
If \(W\in E\), then Koszul's formula gives
\[
        \langle \nabla_XT,W\rangle=0,
        \qquad
        \langle \nabla_TX,W\rangle=0.
\]
Testing against \(Y\in\D\) gives exactly the intrinsic Hopf--Berger
calculation from Lemma~\ref{lem:D-block-identities}.  Hence, for \(g\),
\[
\nabla_XT=\frac{\sqrt{AC}}2J_1X,
\qquad
\nabla_TX=\frac{\sqrt{AC}}2J_1X.
\]
Therefore
\[
        R(X,T)T
        =
        \frac{AC}{4}X.
\]
The finite-dimensional calculation above gives
\[
        R(E,T)T\subset E,
\]
while the last formula gives
\[
        R(\D,T)T\subset\D.
\]
Thus there are no mixed matrix entries between \(E\) and \(\D\).
\end{proof}

We now check positivity.  Since
\[
        A>C>0,\qquad q\ne0,\qquad h>0,
\]
the middle diagonal entry
\[
        \frac{C^2h^2+q^2(A-C)^2}{4h^2}
\]
is positive.  The determinant of the \((\nu,u_3)\)-block is
\[
\begin{aligned}
        &
        \frac{q^2(A-C)(A+3C)}{4h^2}
        \cdot
        \frac{C(4A-3C)}4
        -
        \left(
        \frac{3qC(A-C)}{4h}
        \right)^2
        \\
        &=
        \frac{q^2A^2C(A-C)}{4h^2}
        >
        0.
\end{aligned}
\]
The \(v_2\)- and \(v_3\)-entries are positive because
\[
        \frac{D^2p^2}{4}>0.
\]
Finally,
\[
        \frac{AC}{4}>0
\]
on \(\D\).  Therefore
\[
        R_T>0
\]
on the whole normal bundle \(T^\perp\).  Since the foliation is
one-dimensional, this is equivalent to
\[
        \Kmix>0.
\]

We have proved the following.

\begin{proposition}
\label{prop:S4m-S3}
For every \(m\ge1\), every \(A>C>0\), every \(D>0\), and every
\[
        p,q\in\mathbb R
\]
satisfying
\[
        p\ne0,\qquad q\ne0,\qquad p^2+q^2<1,
\]
the metric \eqref{eq:S4mS3-metric} on
\[
        S^{4m-1}\times S^3
\]
carries a one-dimensional totally geodesic foliation by closed circles with
positive mixed sectional curvature.  The leaves have length
\[
        \ell_{\mathrm{leaf}}=\frac{4\pi}{\sqrt{AC}} .
\]
\end{proposition}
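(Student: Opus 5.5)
The proposition collects what has been verified in this section, so the plan is to assemble those pieces in order and check that nothing was assumed beyond the stated hypotheses.

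\textbf{Step 1: positivity and closed leaves.}
First, for \(p^2+q^2<1\) the Gram block on \(\spanop\{u_1,u_3,v_1\}\) has positive leading minors \(1\), \(1\), \(1-p^2-q^2\). Hence \eqref{eq:S4mS3-metric} is a Riemannian metric, and \(h>0\) and the adapted orthonormal frame \(T,\nu,u_2,u_3,v_2,v_3,\D\) are well defined.
Next, \(T=u_1=\tfrac{\sqrt{AC}}2\xi_1\) is a constant multiple of the Hopf field \(\xi_1\), whose flow is periodic of period \(2\pi\). The orbits are therefore closed and form the circle fibration over \(\CP^{2m-1}\times S^3\).
The cross terms only add \(p\,\sigma_1\) to the pairing with \(T\), so \(g(T,T)=1\) is unchanged. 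The leaf length is then \(4\pi/\sqrt{AC}\) as in \eqref{eq:leaf-length}.

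\textbf{Step 2: geodesic leaves.}
Geodesicity follows from the Koszul computation above: \(\langle[T,Y],T\rangle=0\) for every frame field \(Y\perp T\), using \eqref{eq:S4mS3-hopf-brackets}, \eqref{eq:S4mS3-nu-hopf-brackets}, the commutation of the two factors, and \([T,X]=0\) for the invariant \(\D\)-frame. This gives \(\nabla_TT=0\), so \(\F\) is totally geodesic.

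\textbf{Step 3: positive mixed curvature.}
Since \(p\) is one-dimensional, \(\Kmix>0\) is equivalent to \(R_T>0\) on \(T^\perp\). By Lemma~\ref{lem:S4m-S3-curvature-block}, \(R_T\) is block diagonal with respect to
\[
\spanop\{\nu,u_3\}\oplus\spanop\{u_2\}\oplus\spanop\{v_2,v_3\}\oplus\D.
\]
Each block is positive definite under the hypotheses:
\begin{itemize}
\item The \(u_2\)-entry is at least \(C^2/4>0\).
\item The \(\spanop\{v_2,v_3\}\)-block is \(D^2p^2/4\), which is positive because \(p\ne0\) and \(D>0\).
\item The \(\D\)-block is \(AC/4>0\).
\item The \((\nu,u_3)\)-block has positive trace, since both diagonal entries are positive when \(A>C>0\) and \(q\ne0\). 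Its determinant is \(q^2A^2C(A-C)/(4h^2)>0\).
\end{itemize}

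\textbf{Main obstacle.}
The step that needs the most care is the \((\nu,u_3)\) determinant identity. I would re-verify it by expanding
\[
(A+3C)(4A-3C)-9C(A-C)=4A^2,
\]
which gives the determinant \(q^2A^2C(A-C)/(4h^2)\). Both \(q\ne0\) and \(A>C\) are genuinely needed here.

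A second point to check is that the \(S^3\)-coupling does not create \(E\)–\(\D\) mixed entries. This is the argument in Lemma~\ref{lem:S4m-S3-curvature-block}. It relies on \(\D\) staying orthogonal to the modified block and on Hopf–\(\D\) brackets having no Hopf component, so that the computation reduces to Lemma~\ref{lem:D-block-identities}.

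Combining Steps 1–3 yields the proposition.
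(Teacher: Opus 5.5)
Your proposal is correct and follows the paper's argument essentially step for step: positive-definiteness of the Gram block, periodicity of the Hopf flow for the closed leaves of length \(4\pi/\sqrt{AC}\), the Koszul check \(\langle[T,Y],T\rangle=0\) for geodesicity, and blockwise positivity of \(R_T\) from Lemma~\ref{lem:S4m-S3-curvature-block}. Your explicit check that the \((\nu,u_3)\)-block has positive diagonal entries, and not just positive determinant, fills a step the paper leaves implicit; in Step~3 you should say the foliation is one-dimensional, since \(p\) here denotes the coupling constant.
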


\subsection{A parallel anisotropic subfamily}
\label{subsec:parallel-anisotropic}

The preceding formulas allow us to distinguish exactly when the mixed
curvature operator rotates and when it is parallel.  Set
\[
        r=\frac qh,
        \qquad
        d=A-C>0,
\]
and consider the block
\[
        E_0=\operatorname{span}\{\nu,u_2,u_3\}.
\]
In the ordered basis \(\nu,u_2,u_3\), Lemmas
\ref{lem:S4m-S3-connection} and \ref{lem:S4m-S3-curvature-block} give the
normal connection matrix
\[
\Omega_0
=
\frac12
\begin{pmatrix}
0&rd&0\\
-rd&0&-C\\
0&C&0
\end{pmatrix}
\]
and the mixed curvature matrix
\[
R_0
=
\frac14
\begin{pmatrix}
r^2d(A+3C)&0&-3rCd\\
0&C^2+r^2d^2&0\\
-3rCd&0&C(4A-3C)
\end{pmatrix}.
\]
A direct multiplication yields
\begin{equation}
\label{eq:curvature-commutator}
[\Omega_0,R_0]
=
\frac{Cd(C-r^2d)}2
\begin{pmatrix}
0&r&0\\
r&0&-1\\
0&-1&0
\end{pmatrix}.
\end{equation}
The \(v_2,v_3\)-curvature block is scalar, as is the \(\D\)-block, and the
normal connection preserves all these blocks.  Consequently, within the
family of Proposition~\ref{prop:S4m-S3},
\begin{equation}
\label{eq:parallel-condition}
        \nabla_T^\perp R_T=0
        \quad\Longleftrightarrow\quad
        r^2(A-C)=C.
\end{equation}

\begin{proposition}[Parallel anisotropic subfamily]
\label{prop:parallel-anisotropic-S3}
Fix \(m\ge1\) and \(A>C>0\).  Put
\[
        r=\sqrt{\frac{C}{A-C}}.
\]
For any \(p\in(0,1)\), set
\[
        h=\sqrt{\frac{1-p^2}{1+r^2}},
        \qquad
        q=rh,
        \qquad
        D=\frac{\sqrt{AC}}{p}.
\]
Then the metric \eqref{eq:S4mS3-metric} satisfies the hypotheses of
Proposition~\ref{prop:S4m-S3}, and its mixed curvature operator is positive,
parallel along every leaf, and nonscalar.  At every point,
\[
        \operatorname{spec}(R_T)
        =
        \left\{\frac{AC}{4},\,AC\right\},
\]
where \(AC\) is simple and \(AC/4\) has multiplicity \(4m\).
\end{proposition}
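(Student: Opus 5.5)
We verify the hypotheses of Proposition~\ref{prop:S4m-S3} directly, obtain parallelism from the criterion \eqref{eq:parallel-condition}, and then compute the spectrum block by block using Lemma~\ref{lem:S4m-S3-curvature-block}.

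\emph{Admissibility.} Clearly \(p\ne0\), and \(q=rh\ne0\) because \(r>0\) and \(h>0\) for \(p\in(0,1)\). Moreover
\[
p^2+q^2=p^2+r^2h^2=p^2+\frac{r^2(1-p^2)}{1+r^2}<1 ,
\]
since this is equivalent to \((1-p^2)/(1+r^2)>0\). We must also check that the parameter \(h\) defined in the statement agrees with \(\sqrt{1-p^2-q^2}\). From \(h^2(1+r^2)=1-p^2\) we get \(h^2=1-p^2-r^2h^2=1-p^2-q^2\), as required. Hence \(q/h=r\), and so \(r^2(A-C)=C\) by the choice of \(r\). By \eqref{eq:parallel-condition}, this gives \(\nabla^\perp_TR_T=0\). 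Positivity is supplied by Proposition~\ref{prop:S4m-S3}.

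\emph{Spectrum.} On \(\D\), which has dimension \(4m-4\), the operator is \(AC/4\). On \(\spanop\{v_2,v_3\}\) it is \(D^2p^2/4=AC/4\), by the choice \(D=\sqrt{AC}/p\). For \(R_3\) we substitute \(r^2d=C\) with \(d=A-C\).

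\begin{itemize}
\item The middle entry becomes \((C^2+r^2d^2)/4=(C^2+Cd)/4=AC/4\).
\item The \((\nu,u_3)\)-block has trace \(\tfrac14\bigl(C(A+3C)+C(4A-3C)\bigr)=\tfrac{5AC}{4}\).
\item The determinant of that block was computed earlier as \(q^2A^2Cd/(4h^2)=r^2dA^2C/4=A^2C^2/4\).
\end{itemize}

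The eigenvalues of the \((\nu,u_3)\)-block are therefore the roots of \(\lambda^2-\tfrac54AC\lambda+\tfrac14A^2C^2=0\), namely \(AC/4\) and \(AC\). Counting multiplicities, \(AC/4\) appears \(1+1+2+(4m-4)=4m\) times and \(AC\) appears once. This matches \(\operatorname{rank}T^\perp=4m+1\) and shows that \(R_T\) is nonscalar.

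\emph{Main obstacle.} The delicate step is the consistency of \(h\): the statement defines \(h\) independently, so one must confirm that it is the same \(h\) that enters the frame \(\nu\). Once that is settled, everything else reduces to substituting \(r^2d=C\) into the entries of Lemma~\ref{lem:S4m-S3-curvature-block} and the determinant computed after it.
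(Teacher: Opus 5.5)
Your proof is correct and follows the paper's route. You check that $h^2=1-p^2-q^2$ and $q/h=r$, invoke the criterion \eqref{eq:parallel-condition}, and read off the spectrum from Lemma~\ref{lem:S4m-S3-curvature-block} using $r^2(A-C)=C$ and $D^2p^2/4=AC/4$. The only cosmetic difference is that you get the eigenvalues of the $(\nu,u_3)$-block from its trace and the previously computed determinant, rather than factoring the $3\times3$ characteristic polynomial. Your multiplicity count $1+1+2+(4m-4)=4m$ is exactly right.
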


\begin{proof}
The definitions give
\[
        h^2=1-p^2-q^2>0,
        \qquad
        \frac qh=r,
\]
so the metric is well defined and \eqref{eq:parallel-condition} holds.  Hence
\(R_T\) is parallel.  Under the relation \(r^2(A-C)=C\), the characteristic
polynomial of the three-dimensional block factors as
\[
        \det(\lambda\Id-R_0)
        =
        \left(\lambda-AC\right)
        \left(\lambda-\frac{AC}{4}\right)^2.
\]
The \(\D\)-block is \(AC/4\), and the choice of \(D\) makes both
\(v_2,v_3\)-entries equal to \(AC/4\).  This proves the asserted spectrum
and, in particular, positivity and anisotropy.
\end{proof}

The breakdown of the Ferus argument is visible on the real eigenline of the
conullity tensor.  Choose the parallel frame to agree with the moving frame at
one point.  Under \eqref{eq:parallel-condition}, the \((\nu,u_2,u_3)\)-block
of \(B(0)\) is
\[
 B_0=\frac12
 \begin{pmatrix}
 0&r(A-C)&0\\
 -r(A+C)&0&C\\
 0&-(2A-C)&0
 \end{pmatrix}.
\]
Its characteristic polynomial is
\[
 \det(\lambda\Id-B_0)
 =\lambda\left(\lambda^2+\frac{3AC}{4}\right).
\]
Its unique real eigenline is generated by
\[
        Z=r\nu+(1+2r^2)u_3,
        \qquad B_0Z=0.
\]
On the other hand,
\[
        R_0Z=\frac{AC}{2}\bigl(-r\nu+(r^2+2)u_3\bigr)
        \notin\R Z.
\]
The Riccati equation therefore gives
\(B'(0)Z=-R_0Z\notin\R Z\).  Thus the real eigenline forced by odd normal
rank is not preserved by the evolution.  This is the point at which
scalarity, rather than parallelism, enters the Ferus argument.

\begin{remark}\label{rem:two-couplings}
The two couplings have different roles.  The \(q\)-term makes the
\((\nu,u_3)\)-block positive, whereas the \(p\)-term, through
\([v_2,v_3]=Dv_1\), creates the positive entries
\(R_T(v_2)=R_T(v_3)=D^2p^2/4\).  Thus neither coupling can be omitted.
\end{remark}

Finally, the metric is not bundle-like.  Indeed,
Lemma~\ref{lem:S4m-S3-connection} gives
\[
 \langle B\nu,u_2\rangle+\langle\nu,Bu_2\rangle
 =-\frac{qC}{h}\ne0.
\]
By \eqref{eq:Lie-B}, the transverse metric is not invariant under the circle
flow.  This completes the \(4m+2\)-dimensional construction.

\section{The fixed point examples}
\label{sec:S4m-times-S1}

View \(S^3\) as the unit quaternions and choose \(v_1\) in the \(i\)-direction.
Conjugation by \(i\),
\[
 \tau(a+bi+cj+dk)=a+bi-cj-dk,
\]
fixes \(v_1\), reverses \(v_2,v_3\), and preserves the bi-invariant metric
and \(\sigma_1\).  Hence \(\Id\times\tau\) is an isometry of
\((S^{4m-1}\times S^3,g_{p,q})\).  Its fixed component
\[
        P_m=S^{4m-1}\times S_i^1
\]
is totally geodesic.

The field \(T=u_1\) is tangent to \(P_m\) and generates
\[
        S^1\longrightarrow P_m\longrightarrow \CP^{2m-1}\times S_i^1.
\]
If \(\sigma=\sigma_1|_{S_i^1}\), the induced metric is
\begin{equation}\label{eq:fixed-metric}
\begin{aligned}
 g_P={}&g_{A,C}+\sigma^2
 +p(\beta_1\otimes\sigma+\sigma\otimes\beta_1)\\
 &+q(\beta_3\otimes\sigma+\sigma\otimes\beta_3).
\end{aligned}
\end{equation}
Because \(P_m\) is totally geodesic, both \(\nabla_TT=0\) and the mixed
curvatures of tangent planes agree with those of the ambient construction.

\begin{proposition}\label{prop:S4m-S1}
For every \(m\ge1\), \(A>C>0\), \(D>0\), and
\(p,q\ne0\) with \(p^2+q^2<1\), the metric \eqref{eq:fixed-metric} on
\(S^{4m-1}\times S^1\) carries a one-dimensional totally geodesic foliation
by closed circles with \(\Kmix>0\).  The leaves have length
\(4\pi/\sqrt{AC}\), and the metric is not bundle-like.
\end{proposition}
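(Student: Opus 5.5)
Throughout, \(g_P\) is the metric \eqref{eq:fixed-metric} on the fixed component \(P_m\). The plan is to deduce everything from Proposition~\ref{prop:S4m-S3} and from the fact that \(P_m\) is a totally geodesic fixed point component of the isometry \(\Id\times\tau\).

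First I would confirm that \(\Id\times\tau\) is an isometry of \(g_{p,q}\). Conjugation by \(i\) is a group automorphism of \(Sp(1)\) and preserves the bi-invariant metric. It maps the left-invariant fields \(v_1,v_2,v_3\) to \(v_1,-v_2,-v_3\), so it preserves \(\sigma_1\). Hence it preserves each term of \eqref{eq:S4mS3-metric}. The fixed set of \(\tau\) is the circle \(S^1_i=\{\cos\theta+i\sin\theta\}\), which is the one-parameter subgroup generated by \(v_1\). Therefore \(P_m=S^{4m-1}\times S^1_i\) is a fixed component, and fixed components of isometries are totally geodesic. The tangent space of \(P_m\) is \(TS^{4m-1}\oplus\R v_1\), and \(T=u_1\) lies in it. Restricting \(g_{p,q}\) to this subspace gives exactly \eqref{eq:fixed-metric}, since \(g_{S^3}\) restricts to \(\sigma^2\).

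Second, I would transfer the foliation properties. Because \(P_m\) is totally geodesic, the ambient geodesic integral curves of \(T\) lie in \(P_m\) and are geodesics for \(g_P\). The second fundamental form vanishes, so the Gauss equation gives \(R^P(U,T)T=R(U,T)T\) for \(U\) tangent to \(P_m\). Thus every mixed plane of \(P_m\) is a mixed plane of the ambient manifold, and \(\Kmix>0\) follows from Proposition~\ref{prop:S4m-S3}. The leaves are the same \(u_1\)-orbits, so they are closed, form the stated circle fibration, and have length \(4\pi/\sqrt{AC}\) by \eqref{eq:leaf-length}.

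Third, I would show the metric is not bundle-like. The vectors \(\nu=(v_1-pu_1-qu_3)/h\) and \(u_2\) are both tangent to \(P_m\). By totally geodesicity, the conullity tensor of \(P_m\) is the restriction of the ambient one, \(\nabla^P_UT=\nabla_UT\). So the same quantity \(\langle B\nu,u_2\rangle+\langle\nu,Bu_2\rangle=-qC/h\ne0\) computed before still applies. By \eqref{eq:Lie-B}, \(B\) is not skew-symmetric, and hence \(g_P\) is not bundle-like.

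The main obstacle is justifying that \(\tau\) acts on the frame as claimed, in particular the orientation of the fixed circle. I also need to check that the positivity argument, which used all of \(E\) including \(v_2,v_3\), restricts correctly. This is immediate, because a positive definite \(R_T\) restricted to the invariant subspace \(\spanop\{\nu,u_2,u_3\}\oplus\D\) stays positive. Note that the \(p\)-coupling is no longer needed for positivity here, but keeping \(p\ne0\) does no harm.
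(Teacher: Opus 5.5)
Your proposal is correct and follows essentially the same route as the paper. The paper likewise uses that $\Id\times\tau$ is an isometry with totally geodesic fixed component $P_m$, so geodesicity of the leaves, positivity of $\Kmix$ on the invariant subspace $\spanop\{\nu,u_2,u_3\}\oplus\D$, the leaf length, and the identity $\langle B\nu,u_2\rangle+\langle\nu,Bu_2\rangle=-qC/h\neq0$ all transfer from the ambient construction, and then \eqref{eq:Lie-B} concludes (your side remark that the $p$-coupling is not needed for positivity on $P_m$ is also correct).
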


\begin{proof}
Only the last assertion remains.  Since \(P_m\) is totally geodesic, the
ambient connection table restricts to it, and
\[
 \langle B_P\nu,u_2\rangle+\langle\nu,B_Pu_2\rangle
 =-\frac{qC}{h}\ne0.
\]
Equation \eqref{eq:Lie-B} applies.
\end{proof}

\begin{corollary}\label{cor:parallel-anisotropic-S1}
For the parameters in Proposition~\ref{prop:parallel-anisotropic-S3}, the
mixed-curvature operator of \eqref{eq:fixed-metric} is positive and parallel,
with spectrum \(\{AC/4,AC\}\); \(AC\) is simple and \(AC/4\) has multiplicity
\(4m-2\).
\end{corollary}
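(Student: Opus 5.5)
Since \(P_m=S^{4m-1}\times S^1_i\) is the fixed component of the isometry \(\Id\times\tau\), it is totally geodesic. The normal bundle of \(T\) inside \(P_m\) is therefore the \(\tau\)-fixed part of the ambient normal bundle. The plan is to read off the mixed-curvature operator of \(P_m\) as the restriction of the ambient operator from Proposition~\ref{prop:parallel-anisotropic-S3} to this invariant subbundle.

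First identify the tangent space. At points of \(P_m\), the tangent space is spanned by \(T,u_2,u_3,\D\) together with \(v_1\), since \(\tau\) fixes \(v_1\) and reverses \(v_2,v_3\). Hence \(T^\perp\cap TP_m=\spanop\{\nu,u_2,u_3\}\oplus\D\). Here \(\nu=(v_1-pT-qu_3)/h\) is tangent to \(P_m\), and \(v_2,v_3\) are exactly the \((-1)\)-eigenvectors of \(d\tau\), which are normal to \(P_m\).

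Next compare curvature and connection. Because \(P_m\) is totally geodesic, the Gauss equation has no second fundamental form term, so \(R^P(U,T)T=R(U,T)T\) for \(U\) tangent to \(P_m\). Likewise the Levi-Civita connection of \(g_P\) is the restriction of the ambient one. By Lemma~\ref{lem:S4m-S3-curvature-block}, \(R_T\) preserves \(E_0\oplus\D\). Hence \(R_T^P\) is the restriction \(R_0\oplus\frac{AC}{4}\Id_{\D}\). By Lemma~\ref{lem:S4m-S3-connection}, the normal connection \(\nabla_T^\perp\) also preserves \(E_0\oplus\D\): the matrix \(\Omega_0\) lives on \(E_0\), and \(\D\) is preserved. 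Consequently the normal connection of \(P_m\) is the restriction of the ambient one.

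Parallelism then follows from \eqref{eq:parallel-derivative}. The commutator \([\Omega_0,R_0]\) in \eqref{eq:curvature-commutator} vanishes under \(r^2(A-C)=C\), and the \(\D\)-block is scalar and preserved. Therefore \(\nabla^\perp_TR^P_T=0\).

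For the spectrum, the proof of Proposition~\ref{prop:parallel-anisotropic-S3} shows that \(R_0\) has eigenvalues \(AC\) (simple) and \(AC/4\) (double), and \(\D\) contributes \(AC/4\) with multiplicity \(\dim\D=4m-4\). The total multiplicity of \(AC/4\) is therefore \(2+4m-4=4m-2\), which matches \(\dim P_m-2\). Positivity is immediate since both eigenvalues are positive. The \(D^2p^2/4\) entries from \(v_2,v_3\) are discarded here. In the ambient case they were tuned to \(AC/4\) by the choice of \(D\), but on \(P_m\) that choice is irrelevant.

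The only step needing care is confirming that \(v_2,v_3\) are normal to \(P_m\) and that \(\nu\) is tangent to it. Both follow because \(\tau\) acts as an isometry fixing \(T,u_2,u_3,v_1,\D\) and negating \(v_2,v_3\). The \((-1)\)-eigenspace of \(d\tau\) is then \(g\)-orthogonal to the \((+1)\)-eigenspace, since \(d\tau\) is an isometry.
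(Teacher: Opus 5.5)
Your proposal is correct and follows the paper's argument. The paper's one-line proof also identifies the normal bundle of \(T\) in \(P_m\) as \(\spanop\{\nu,u_2,u_3\}\oplus\D\), notes that normal parallel transport preserves it, and restricts the ambient curvature operator. You additionally spell out what the paper leaves implicit: the \(\pm1\)-eigenspaces of \(d\tau\), the Gauss equation for a totally geodesic submanifold, the vanishing of \([\Omega_0,R_0]\), and the multiplicity count \(2+(4m-4)=4m-2\).
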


\begin{proof}
The normal bundle is
\(\spanop\{\nu,u_2,u_3\}\oplus\D\), preserved by normal parallel transport,
and the curvature operator is the restriction of the ambient one.
\end{proof}
Sections~\ref{sec:S4m-times-S3} and~\ref{sec:S4m-times-S1}
prove Theorem~\ref{thm:main}, while
Proposition~\ref{prop:parallel-anisotropic-S3} and
Corollary~\ref{cor:parallel-anisotropic-S1}
prove Theorem~\ref{thm:parallel-anisotropic}.

\section{Pinching and the size of the symmetric conullity}
\label{sec:pinching}

For a one-dimensional totally geodesic foliation generated by a unit field
\(T\), let
\[
        R_T(U)=R(U,T)T
\]
be the mixed curvature operator on \(T^\perp\).  Since the manifolds are
closed and \(R_T\) is positive in our examples, we may set
\[
        \lambda_{\min}
        =
        \min_{x\in M}\lambda_{\min}(R_T(x)),
        \qquad
        \lambda_{\max}
        =
        \max_{x\in M}\lambda_{\max}(R_T(x)).
\]
Then
\[
        \delta_{\mathrm{mix}}
        =
        \frac{\lambda_{\min}}{\lambda_{\max}}
\]
is the mixed curvature pinching constant.

\subsection{An almost scalar but rotating family}

We use the \(S^{4m-1}\times S^3\) construction of
Section~\ref{sec:S4m-times-S3}.  Fix once and for all a number
\[
        p_0\in(0,1).
\]
For \(\varepsilon>0\), set
\[
        C_\varepsilon=1,
        \qquad
        A_\varepsilon=1+\varepsilon.
\]
Define
\[
        r_\varepsilon^2
        =
        \frac{A_\varepsilon C_\varepsilon}
        {(A_\varepsilon-C_\varepsilon)(A_\varepsilon+3C_\varepsilon)}
        =
        \frac{1+\varepsilon}{\varepsilon(4+\varepsilon)}.
\]
We choose \(h_\varepsilon\) and \(q_\varepsilon\) by
\[
        h_\varepsilon
        =
        \sqrt{\frac{1-p_0^2}{1+r_\varepsilon^2}},
        \qquad
        q_\varepsilon=r_\varepsilon h_\varepsilon .
\]
Then
\[
        h_\varepsilon^2
        =
        1-p_0^2-q_\varepsilon^2,
\]
and hence
\[
        p_0^2+q_\varepsilon^2<1.
\]
Finally set
\[
        D_\varepsilon
        =
        \frac{\sqrt{A_\varepsilon C_\varepsilon}}{p_0}
        =
        \frac{\sqrt{1+\varepsilon}}{p_0}.
\]
With these choices,
\[
        \frac{q_\varepsilon}{h_\varepsilon}=r_\varepsilon,
        \qquad
        \frac{D_\varepsilon^2p_0^2}{4}
        =
        \frac{A_\varepsilon C_\varepsilon}{4}
        =
        \frac{1+\varepsilon}{4}.
\]

The parallelism condition \eqref{eq:parallel-condition} is not satisfied:
\[
        r_\varepsilon^2(A_\varepsilon-C_\varepsilon)
        =
        \frac{1+\varepsilon}{4+\varepsilon}
        \ne 1=C_\varepsilon.
\]
Thus, for every \(\varepsilon>0\), this is a genuinely rotating rather than
a parallel subfamily.

Let \(g_\varepsilon\) be the metric \eqref{eq:S4mS3-metric} with parameters
\[
        A=A_\varepsilon,\quad C=C_\varepsilon,\quad
        p=p_0,\quad q=q_\varepsilon,\quad D=D_\varepsilon.
\]
By Proposition~\ref{prop:S4m-S3}, it has positive mixed sectional curvature
on
\[
        S^{4m-1}\times S^3.
\]

We now examine the matrix of \(R_T\).  In the finite block
\[
        \operatorname{span}\{\nu,u_2,u_3\},
\]
Lemma~\ref{lem:S4m-S3-curvature-block} gives
\[
R_{\varepsilon}^{(3)}
=
\begin{pmatrix}
\dfrac{1+\varepsilon}{4}
&
0
&
-\dfrac{3\varepsilon r_\varepsilon}{4}
\\[8pt]
0
&
\dfrac{1+\varepsilon^2r_\varepsilon^2}{4}
&
0
\\[8pt]
-\dfrac{3\varepsilon r_\varepsilon}{4}
&
0
&
\dfrac{1+4\varepsilon}{4}
\end{pmatrix}.
\]
Here
\[
        \varepsilon r_\varepsilon
        =
        \sqrt{\frac{\varepsilon(1+\varepsilon)}{4+\varepsilon}},
\]
and
\[
        \varepsilon^2r_\varepsilon^2
        =
        \frac{\varepsilon(1+\varepsilon)}{4+\varepsilon}.
\]
Therefore
\[
        R_{\varepsilon}^{(3)}
        \longrightarrow
        \frac14 \operatorname{Id}
        \qquad
        \text{as }\varepsilon\to0.
\]
The \(v_2\)- and \(v_3\)-entries are
\[
        \frac{D_\varepsilon^2p_0^2}{4}
        =
        \frac{1+\varepsilon}{4},
\]
and the \(\D\)-block is
\[
        \frac{A_\varepsilon C_\varepsilon}{4}\operatorname{Id}_{\D}
        =
        \frac{1+\varepsilon}{4}\operatorname{Id}_{\D}.
\]
Consequently, on the whole normal bundle,
\[
        R_T^{g_\varepsilon}
        \longrightarrow
        \frac14\operatorname{Id}
\]
in operator norm, uniformly on the manifold.  Hence
\[
        \lambda_{\min}(g_\varepsilon)\to\frac14,
        \qquad
        \lambda_{\max}(g_\varepsilon)\to\frac14,
\]
and therefore
\[
        \delta_{\mathrm{mix}}(g_\varepsilon)\to1.
\]

The same conclusion holds for the \(4m\)-dimensional fixed point examples of
Section~\ref{sec:S4m-times-S1}.  Indeed, their mixed curvature operator is
the restriction of the above operator to
\[
        \operatorname{span}\{\nu,u_2,u_3\}\oplus\D,
\]
and that restriction also converges to
\[
        \frac14\operatorname{Id}.
\]

Thus we obtain:

\begin{proposition}
\label{prop:pinching}
For every even integer \(N\ge4\) and every \(0<\delta<1\), the examples in
Propositions~\ref{prop:S4m-S3} and~\ref{prop:S4m-S1} can be chosen so that
\[
        \delta_{\mathrm{mix}}>\delta .
\]
\end{proposition}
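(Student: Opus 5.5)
The proof will be essentially a compactness/continuity bookkeeping argument built on the explicit family \(g_\varepsilon\) just constructed. Write \(N=4m+2\) or \(N=4m\) with \(m\ge1\), according to whether \(N\equiv2\) or \(N\equiv0\pmod 4\). In the first case we use \((S^{4m-1}\times S^3,g_\varepsilon)\) from Proposition~\ref{prop:S4m-S3}; in the second we use the fixed point submanifold \(P_m\) of Proposition~\ref{prop:S4m-S1} with the induced metric. The parameters \(A_\varepsilon=1+\varepsilon\), \(C_\varepsilon=1\), \(p_0\), \(q_\varepsilon\), \(D_\varepsilon\) satisfy \(A>C>0\), \(p_0\ne0\), \(q_\varepsilon\ne0\) (since \(r_\varepsilon>0\)) and \(p_0^2+q_\varepsilon^2<1\). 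Hence both propositions apply, and every member of the family is an admissible example of Theorem~\ref{thm:main}.

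Next I would compute \(\lambda_{\min}\) and \(\lambda_{\max}\). The frame \(\nu,u_2,u_3,v_2,v_3,\D\) is orthonormal at every point, and by Lemma~\ref{lem:S4m-S3-curvature-block} the matrix of \(R_T\) in this frame is the same constant matrix at every point. So the spectrum of \(R_T\) is independent of the point, and \(\lambda_{\min},\lambda_{\max}\) are the extreme eigenvalues of one explicit block-diagonal matrix. That matrix is \(R^{(3)}_\varepsilon\oplus\frac{1+\varepsilon}4\Id\). For the fixed point examples the \(v_2,v_3\) entries drop out, which only removes eigenvalues equal to \(\frac{1+\varepsilon}4\).

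The entries of \(R^{(3)}_\varepsilon\) were shown to converge to \(\frac14\Id\), because \(\varepsilon r_\varepsilon\to0\) and \(\varepsilon^2r_\varepsilon^2\to0\). Eigenvalues depend continuously on a symmetric matrix: by Weyl's inequality each eigenvalue is within the operator norm of \(R^{(3)}_\varepsilon-\frac14\Id\) of \(\frac14\). Hence \(\lambda_{\min}(g_\varepsilon)\) and \(\lambda_{\max}(g_\varepsilon)\) both tend to \(\frac14\), and \(\delta_{\mathrm{mix}}(g_\varepsilon)\to1\). Given \(\delta<1\), choosing \(\varepsilon\) small enough gives \(\delta_{\mathrm{mix}}>\delta\).

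The only step that needs care is that \(\delta_{\mathrm{mix}}\), defined via mixed unit planes, coincides with the eigenvalue ratio. Since \(p=1\), every mixed plane is spanned by \(T\) and a unit \(U\perp T\), and \(\Kmix=\langle R_TU,U\rangle\). Its extrema over unit \(U\) are the extreme eigenvalues of \(R_T\), and these are attained because the manifold is closed. With that identification the proposition follows.
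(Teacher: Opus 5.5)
Your proposal is correct and follows essentially the same route as the paper. You use the same family \(g_\varepsilon\) with \(A_\varepsilon=1+\varepsilon\), \(C_\varepsilon=1\) and \(D_\varepsilon=\sqrt{1+\varepsilon}/p_0\), show that the constant block-diagonal \(R_T\) converges to \(\frac14\Id\) (which you make quantitative via Weyl's inequality), and pass to the fixed point examples by restricting to \(\spanop\{\nu,u_2,u_3\}\oplus\D\), with \(\delta_{\mathrm{mix}}=\lambda_{\min}/\lambda_{\max}\) exactly as in the paper.
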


The lengths of the leaves are also controlled.  In the above family,
\[
        \ell_{\mathrm{leaf}}(g_\varepsilon)
        =
        \frac{4\pi}{\sqrt{A_\varepsilon C_\varepsilon}}
        =
        \frac{4\pi}{\sqrt{1+\varepsilon}}
        \longrightarrow
        4\pi.
\]
If we normalize the metrics by
\[
        \widehat g_\varepsilon
        =
        \lambda_{\max}(g_\varepsilon)\,g_\varepsilon,
\]
then
\[
        \max \Kmix(\widehat g_\varepsilon)=1,
\]
and the leaf lengths become
\[
        \ell_{\mathrm{leaf}}(\widehat g_\varepsilon)
        =
        \sqrt{\lambda_{\max}(g_\varepsilon)}
        \,
        \ell_{\mathrm{leaf}}(g_\varepsilon)
        \longrightarrow
        2\pi.
\]
In particular, the leaf lengths remain uniformly bounded in the almost
\(1\)-pinched regime.

Here \(h_\varepsilon\to0\), so the full metrics need not converge to a
nondegenerate limit; the assertion concerns the mixed-curvature operators
and the normalized leaf lengths.

This proves Theorem~\ref{thm:pinched}.

\subsection{The symmetric part of the conullity tensor}

We now record the size of the non-bundle-like part of the examples.  Let
\[
        B(U)=\nabla_U T
\]
be the conullity tensor and set
\[
        B^{\mathrm{sym}}
        =
        \frac12(B+B^{\mathsf T}).
\]
We use the operator norm and denote it by
\[
        \|\cdot\|_{\operatorname{op}}.
\]

In both the \(S^{4m-1}\times S^3\) construction and its fixed
\(S^{4m-1}\times S^1\) submanifold, the symmetric part of \(B\) is supported
on the block
\[
        \operatorname{span}\{\nu,u_2,u_3\}.
\]
Writing
\[
        r=\frac qh,
\]
and using the ordered basis
\[
        \nu,\ u_2,\ u_3,
\]
we have
\[
B^{\mathrm{sym}}
=
\frac12
\begin{pmatrix}
0&-rC&0\\[7pt]
-rC&0&-(A-C)\\[7pt]
0&-(A-C)&0
\end{pmatrix}.
\]
Hence
\begin{equation}
\label{eq:Bsym-operator-norm}
        \|B^{\mathrm{sym}}\|_{\operatorname{op}}^2
        =
        \frac{r^2C^2+(A-C)^2}{4}.
\end{equation}
The Hilbert--Schmidt norm satisfies
\[
        \|B^{\mathrm{sym}}\|_{\mathrm{HS}}^2
        =
        \frac{r^2C^2+(A-C)^2}{2}.
\]

For the almost pinched family above,
\[
        C=1,
        \qquad
        A=1+\varepsilon,
        \qquad
        r^2=r_\varepsilon^2
        =
        \frac{1+\varepsilon}{\varepsilon(4+\varepsilon)}.
\]
Therefore
\[
        \|B^{\mathrm{sym}}\|_{\operatorname{op}}^2
        =
        \frac14
        \left(
        \frac{1+\varepsilon}{\varepsilon(4+\varepsilon)}
        +
        \varepsilon^2
        \right)
        \sim
        \frac{1}{16\varepsilon}.
\]
On the other hand,
\[
        \lambda_{\min}(g_\varepsilon)\to\frac14.
\]
Thus
\[
        \frac{
        \|B^{\mathrm{sym}}\|_{\operatorname{op}}^2
        }{
        \lambda_{\min}(g_\varepsilon)
        }
        \longrightarrow
        \infty.
\]
So the almost \(1\)-pinched examples are far from bundle-like in a
quantitative sense.  The curvature operator becomes almost scalar positive,
but the symmetric part of the conullity tensor becomes large.

There is also a sharp lower bound for the ratio
\[
        \frac{
        \|B^{\mathrm{sym}}\|_{\operatorname{op}}^2
        }{
        \min\Kmix
        }
\]
within the present family.

\begin{proposition}
\label{prop:Bsym-threshold}
For the examples constructed in Sections~\ref{sec:S4m-times-S3} and
\ref{sec:S4m-times-S1},
\[
        \frac{
        \|B^{\mathrm{sym}}\|_{\operatorname{op}}^2
        }{
        \min\Kmix
        }
        \ge 1.
\]
Moreover, the constant \(1\) is sharp in this family.  If the
Hilbert--Schmidt norm is used instead of the operator norm, the corresponding
sharp constant is \(2\).
\end{proposition}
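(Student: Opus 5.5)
Since \(\|B^{\mathrm{sym}}\|_{\operatorname{op}}^2=\tfrac14\bigl(r^2C^2+(A-C)^2\bigr)\) by \eqref{eq:Bsym-operator-norm}, I will bound \(\min\Kmix\) from above by a single diagonal entry of \(R_T\) and compare. The natural candidate is the \(u_2\)-entry of Lemma~\ref{lem:S4m-S3-curvature-block}:
\[
        \Kmix(T,u_2)=\langle R_Tu_2,u_2\rangle=\frac{C^2h^2+q^2(A-C)^2}{4h^2}=\frac{C^2+r^2(A-C)^2}{4}.
\]
The vector \(u_2\) is normal in both the \(S^{4m-1}\times S^3\) examples and the fixed point examples \(S^{4m-1}\times S^1\), since \(u_2\in\operatorname{span}\{\nu,u_2,u_3\}\). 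Hence \(\min\Kmix\le \tfrac14\bigl(C^2+r^2d^2\bigr)\) with \(d=A-C\).

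The inequality \(\|B^{\mathrm{sym}}\|^2_{\operatorname{op}}\ge\min\Kmix\) then reduces to
\[
        r^2C^2+d^2\ \ge\ C^2+r^2d^2,
        \qquad\text{i.e.}\qquad (r^2-1)(C^2-d^2)\ge0,
\]
and this need not hold. So \(u_2\) alone is not enough, and I will also use the \(\D\)-block entry \(AC/4\) (for \(m\ge2\)) or a test vector in the \((\nu,u_3)\)-plane.

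For \(m=1\), the cleanest route is to take the minimum of the \(\nu\)-entry \(r^2d(A+3C)/4\), the \(u_3\)-entry \(C(4A-3C)/4\), and the \(u_2\)-entry, together with the exact smallest eigenvalue of the \((\nu,u_3)\)-block. The determinant of that block is \(r^2A^2Cd/4\). Its smallest eigenvalue is therefore at most \(\det/\mathrm{trace}\cdot 2\), and more simply at most the geometric mean \(\tfrac12 rA\sqrt{Cd}\).

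I then split into cases. If \(r^2\ge1\) and \(C\ge d\), or if \(r^2\le1\) and \(C\le d\), the \(u_2\) bound closes the argument. In the remaining cases I use \(\lambda_{\min}\le \tfrac12 rA\sqrt{Cd}\). By AM--GM,
\[
        r^2C^2+d^2\ \ge\ 2rCd,
\]
so it suffices to check \(2rCd\ge 2rA\sqrt{Cd}\cdot(\text{const})\), possibly adjusted with the \(\nu\)-entry bound. Getting this case analysis to close cleanly is the main obstacle. I expect to try first the test vector \(Z=r\nu+(1+2r^2)u_3\)-type combinations, or an optimal mixture of \(\nu\) and \(u_3\), tuned so that the Rayleigh quotient becomes exactly comparable to \(\tfrac14(r^2C^2+d^2)\).

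Sharpness would come from a limit in which equality is approached. The cases \(r^2\to1\) or \(d\to C\) make the \(u_2\) comparison an equality, so I would pick \(r=1\) with the other entries, including \(D^2p^2/4\) chosen large, not smaller than the \(u_2\)-entry, and let parameters tend to the boundary. The Hilbert--Schmidt statement follows immediately, because \(\|B^{\mathrm{sym}}\|_{\mathrm{HS}}^2=2\|B^{\mathrm{sym}}\|_{\operatorname{op}}^2\) identically for this rank-two block (its eigenvalues are \(0,\pm s\)). Hence the constant doubles to \(2\), and sharpness transfers along the same sequence.
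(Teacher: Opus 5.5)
Your proposal has a genuine gap: the inequality itself is never proved, and none of the bounds you list can close it. Each candidate you propose to combine fails at \(C=1\), \(A=3/2\), \(r^2=1/2\), which lies in your unresolved case \(r^2<1\), \(C>d\). The candidates are a single diagonal entry of \(R_T\) (\(\nu\), \(u_2\), \(u_3\), or \(\D\)), and the geometric mean \(\sqrt{\det}\) or harmonic mean \(2\det/\operatorname{tr}\) of the \((\nu,u_3)\)-block. At that point \(\|B^{\mathrm{sym}}\|_{\operatorname{op}}^2=3/16\), while:
the \(\nu\)- and \(u_2\)-entries both equal \(9/32\);
the \(u_3\)-entry is \(3/4\) and the \(\D\)-entry is \(3/8\);
\(\sqrt{\det}=3/8\) and \(2\det/\operatorname{tr}=3/11\).
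Your vector \(Z=r\nu+(1+2r^2)u_3\) has Rayleigh quotient \(17/32\) there. The AM--GM step compares \(2rCd\) with \(2rA\sqrt{Cd}\), and that goes the wrong way for every choice of parameters, since \(C+d>\sqrt{Cd}\). The inequality does hold at this point: the smallest eigenvalue of the \((\nu,u_3)\)-block is \((33-\sqrt{513})/64<3/16\). But only a genuinely mixed vector in the \((\nu,u_3)\)-plane detects it.

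The missing idea, which is the paper's entire proof, is the test vector \(W=\nu+ru_3\). In Lemma~\ref{lem:S4m-S3-curvature-block} the diagonal and cross terms combine as \(d(A+3C)-6Cd+C(4A-3C)=A^2\), so
\[
\frac{\langle R_TW,W\rangle}{|W|^2}=\frac{r^2A^2}{4(1+r^2)},
\qquad
(1+r^2)(r^2C^2+d^2)-r^2(C+d)^2=(r^2C-d)^2\ge0 .
\]
This gives \(\min\Kmix\le\|B^{\mathrm{sym}}\|_{\operatorname{op}}^2\) for all parameters and all \(m\), in both the \(S^3\) and \(S^1\) cases, with no case split. The paper writes the same computation with \(\varepsilon=d/C\).

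The same computation repairs your sharpness sketch. Equality in the \(u_2\)-test alone is not enough, because you must also show no other eigenvalue is smaller. At \(r=1\), \(W\) gives \((C+d)^2/8<(C^2+d^2)/4\) unless \(d=C\), so you need \(A=2C\) and \(r=1\) together. Then:
the \((\nu,u_3)\)-block is \(\frac{C^2}{4}\bigl(\begin{smallmatrix}5&-3\\-3&5\end{smallmatrix}\bigr)\), with eigenvalues \(C^2/2\) and \(2C^2\);
the \(u_2\)- and \(\D\)-entries are \(C^2/2\);
choosing \(D^2p^2/4\ge C^2/2\) makes equality hold exactly, not just in a limit.
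Your Hilbert--Schmidt reduction is correct as stated.
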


\begin{proof}
The \(v_2,v_3\)-block of the \(S^{4m-1}\times S^3\) examples contributes no
symmetric part to \(B\).  It can only decrease \(\min\Kmix\), and hence can
only increase the ratio above.  Thus it is enough to consider the
\((\nu,u_2,u_3)\)-block.

Set
\[
        \varepsilon=\frac{A-C}{C}>0,
        \qquad
        r=\frac qh.
\]
Then \eqref{eq:Bsym-operator-norm} becomes
\[
        \|B^{\mathrm{sym}}\|_{\operatorname{op}}^2
        =
        \frac{C^2}{4}(r^2+\varepsilon^2).
\]
In the \((\nu,u_3)\)-block of \(R_T\), take the vector
\[
        W=\nu+r u_3.
\]  
A direct substitution into the
displayed curvature matrix gives
\[
        \frac{\langle R_TW,W\rangle}{|W|^2}
        =
        \frac{C^2}{4}(r^2+\varepsilon^2)
        -
        \frac{C^2}{4}
        \frac{(r^2-\varepsilon)^2}{1+r^2}.
\]
Hence
\[
        \min\Kmix
        \le
        \frac{\langle R_TW,W\rangle}{|W|^2}
        \le
        \frac{C^2}{4}(r^2+\varepsilon^2)
        =
        \|B^{\mathrm{sym}}\|_{\operatorname{op}}^2.
\]
This proves the inequality.

Sharpness follows by taking
\[
        A=2C,
        \qquad
        r=1.
\]
Then the \((\nu,u_3)\)-block has smallest eigenvalue
\[
        \frac{C^2}{2},
\]
the \(u_2\)-entry is also
\[
        \frac{C^2}{2},
\]
and the \(\D\)-block is
\[
        \frac{AC}{4}=\frac{C^2}{2}.
\]
In the \(S^{4m-1}\times S^3\) case, choose \(D\) so that
\[
        \frac{D^2p^2}{4}\ge\frac{C^2}{2}.
\]
Then
\[
        \min\Kmix
        =
        \frac{C^2}{2}
        =
        \|B^{\mathrm{sym}}\|_{\operatorname{op}}^2.
\]
The statement for the Hilbert--Schmidt norm follows from
\[
        \|B^{\mathrm{sym}}\|_{\mathrm{HS}}^2
        =
        2\|B^{\mathrm{sym}}\|_{\operatorname{op}}^2
\]
on this block.  Notice that the equality choice \(A=2C\), \(r=1\) also
satisfies \(r^2(A-C)=C\); the sharp case therefore belongs to the parallel
anisotropic subfamily of Proposition~\ref{prop:parallel-anisotropic-S3}.
\end{proof}

\section{A quantitative refinement}
\label{sec:consequences}

The preceding constructions show that neither leafwise parallelism of
\(R_T\) nor arbitrarily strong mixed-curvature pinching short of equality
restores the Ferus--Adams bound in the non-bundle-like setting.
This suggests measuring the failure
of transverse metric invariance through the symmetric conullity.

For a totally geodesic foliation \(\F^p\) and a unit
\(X\in T\F\), set
\[
 B_X(U)=(\nabla_UX)^{\Hh},\qquad
 B_X^{\mathrm{sym}}=\frac12(B_X+B_X^{\mathsf T}),
\]
and define
\[
 \mu=\inf\Kmix,
 \qquad
 \sigma=\sup_{x\in M,\,X\in T_x\F,\,|X|=1}
 \|B_X^{\mathrm{sym}}\|_{\operatorname{op}}^2.
\]
The ratio \(\sigma/\mu\) is invariant under constant rescaling.  For a
one-dimensional geodesic foliation, \(\sigma=0\) is exactly the bundle-like
condition by \eqref{eq:Lie-B}; Proposition~\ref{prop:bundle-like-Ferus} then
recovers the Ferus--Adams bound.

\begin{problem}[Quantitative Toponogov problem]
\label{prob:quantitative-toponogov}
Does there exist a universal constant \(c_*>0\) such that every closed
Riemannian manifold with a totally geodesic foliation satisfying
\[
        \Kmix>0,
        \qquad
        \sigma<c_*\mu
\]
obeys \(p\le\rh(n)-1\)?  If so, what is the optimal \(c_*\)?
\end{problem}

Proposition~\ref{prop:Bsym-threshold} implies \(c_*\le1\) for the operator
norm: the present family contains counterexamples with \(\sigma/\mu=1\).
For the Hilbert--Schmidt norm the corresponding upper bound is \(2\).  The
comparison must use \(\mu=\inf\Kmix\), rather than \(\sup\Kmix\): the
parameter \(D\) changes
\[
        R_T(v_2)=R_T(v_3)=\frac{D^2p^2}{4}
\]
without changing \(B^{\mathrm{sym}}\), so \(\sup\Kmix\) can be made
arbitrarily large at fixed \(\sigma\).  
The examples therefore suggest \(\sigma=\mu\) as the natural
borderline, while the bundle-like case lies at \(\sigma=0\).
Counterexamples with higher-dimensional leaves remain a
separate problem.

\section*{Acknowledgements}
The author is grateful to V. A. Sharafutdinov for bringing Toponogov's
question on mixed curvature and Rovenskii's related work to his attention.

\end{document}